\newtheorem{theoremletter}{Theorem}
\newcommand{\Rset}{\mathbb{R}}
\newcommand{\Div}{\textrm{div}}
\def\<{\left\langle}
\def\>{\right\rangle}
\newtheorem{teorema}{Theorem}[section]
\newtheorem{corolario}[teorema]{Corollary}
\newtheorem{lema}[teorema]{Lemma}
\newtheorem{definicao}[teorema]{Definition}
\newtheorem{observacao}[teorema]{Remark}
\newenvironment{demonstracao}[1][Proof]{\noindent\textbf{#1:} }{\ \hspace{\stretch{1}}$\square$\\}
\title[Serrin's type Problems in convex cones in Riemannian manifolds]{Serrin's type Problems in convex cones in Riemannian manifolds}
\author[M. Araújo, A. Freitas, M. Santos and J. S. Sindeaux]{Murilo Araújo$^{1,2}$, Allan Freitas$^{\ast, 1}$, Márcio Santos$^1$ and Joyce S. Sindeaux$^{3}$} 
\address{$^1$Departamento de Matem\'{a}tica\\
	Universidade Federal da Para\'{\i}ba\\
 	58.051-900 Jo\~{a}o Pessoa, Para\'{\i}ba, Brazil}
 \address{$^2$Universidade Federal do Agreste de Pernambuco\\
 	55292-278 Garanhuns, Pernambuco, Brazil} 
 \address{$^3$Universidade Regional do Cariri\\
 	63150-000 Campos Sales, Ceará, Brazil} 
 \email{allan@mat.ufpb.br}
 \email{murilo.chavedar@academico.ufpb.br}
 \email{marcio.santos@academico.ufpb.br}
 \email{joyce.sindeaux@urca.br}
\keywords{Overdetermined Problem; Cone; Soap Bubble theorem; Drift Laplacian}%{critical metrics; geometric inequalities; Reilly’s formula}
\subjclass[2020]{Primary 35R01, 35N25, 53C24; Secondary 35B50, 58J05, 58J32.}
\thanks{$^\ast$Corresponding author.}
\begin{document}

\begin{abstract}
In this work, we discuss several results concerning Serrin's problem in convex cones in Riemannian manifolds. First, we present a rigidity result for an overdetermined problem in a class of warped products with Ricci curvature bounded below. As a consequence, we obtain a rigidity result for Einstein warped products. Next, we derive a soap bubble result and a Heintze-Karcher inequality that characterize the intersection of geodesic balls with cones in these spaces. Finally, we analyze the analogous ovedetermined problem for the drift Laplacian, where the ambient space is a cone in the Euclidean space.
\end{abstract}

\maketitle
%\tableofcontents
%\linenumbers % linhas numeradas 

%%%%%%%%%%%%%%%%%%%%%%
%%%%%%%%% Introduction
%%%%%%%%%%%%%%%%%%%%%%

\section{Introduction} 
% \textcolor{red}{Apresentar e contextualizar o artigo.}
The classical Serrin overdetermined problem \cite{Serrin1971} asserts that a solution exists to the boundary value problem 
\begin{eqnarray} \label{firstSP}
    \left\lbrace \begin{array}{ll}
	\Delta u   & = -1 \quad \textrm{in } \Omega, \\[5pt]
	u & = 0 \quad \textrm{on } \partial\Omega, \\[5pt]
    u_{\nu} & = -c \quad \textrm{on } \partial\Omega,
    \end{array} \right. 
\end{eqnarray}
where $\Omega \subset \mathbb{R}^n$ is a bounded domain, $\nu$ denotes the outward unit normal vector on $\partial \Omega$, and $c > 0$ is a constant,  
if and only if $\Omega$ is a ball and $u$ is a radial function. Serrin provided the complete solution to this problem in \cite{Serrin1971}, where he also presented remarkable motivation arising from fluid dynamics (see also \cite{soko} and \cite[Section 2]{dpv}). While the classical method to demonstrate the rigidity of \eqref{firstSP} relies on the use of the moving planes technique, an alternative approach was introduced in the same published edition of \cite{Serrin1971}. Specifically, Weinberger \cite{Weinberger1971} proposed the $P$-function method, based on an integral technique, to provide an alternative proof of the symmetry result associated with \eqref{firstSP}. This method will be discussed later in more detail.

Related to \eqref{firstSP}, the classical radial solution on the Euclidean ball centered at the origin $o$ with radius $R$, denoted by $B_{R}(o)$, is given by 
\begin{equation}\label{u_sol}
    u(x) = \frac{R^2 - |x|^2}{2n},
\end{equation}
where $|x|$ denotes the Euclidean norm of $x \in \mathbb{R}^n$ and $R=cn$. Considering the open cone in $\mathbb{R}^n$
$$\Sigma\doteq\left\{tx;\\ x\in\omega,\\ t\in (0,\infty)\right\},$$
where $\omega$ is an open connected domain on the unit sphere $\mathbb{S}^{n-1}$, it is worth to note (as observed in \cite{CiraoloRoncoroni2020}) that \eqref{u_sol} also is a solution for the following partially overdetermined problem
\begin{eqnarray*} 
    \left\lbrace \begin{array}{ll}
	\Delta u   & = -1 \quad \textrm{in } B_{R}(o)\cap \Sigma, \\[5pt]
	u & = 0 \quad \textrm{on } \partial B_{R}(o)\cap \Sigma, \\[5pt]
    u_{\nu} & = -c \quad \textrm{on } \partial B_{R}(o)\cap \Sigma,\\[5pt]
    u_{\nu} & = 0 \quad \textrm{on } B_{R}(o)\cap \partial\Sigma.
    \end{array} \right. 
\end{eqnarray*}
As in Serrin's classical result, it is natural to investigate symmetry characterizations, both for the domains and for the solutions, in the sense of {\it sector-like domains}, which, in this specific characterization, can be viewed as intersections between a ball and a cone. These domains will be defined below in a quite general way (see Definition \ref{def_sec}).

In this context, Pacella and Tralli (\cite{Pacella2020}), as well as Ciraolo and Roncoroni (\cite{CiraoloRoncoroni2020}), have extended the study of rigidity characterizations for partially overdetermined problems by employing integral methods. While \cite{Pacella2020} focus on the partially overdetermined problem in a sector-like domain in the Euclidean space, \cite{CiraoloRoncoroni2020} consider more general operators than the Laplacian in the Euclidean space, including possibly degenerate operators, and also addressing analogous problems in space forms, the hyperbolic space and the (hemi)sphere. See also \cite{leeseo} for related overdetermined problems concerning cones in spheres.

Along this last direction, a noteworthy aspect in addressing these problems within the setting of general Riemannian manifolds is that the associated equation  
\[
\Delta u + nku = -1,
\]
where \((M, g)\) is a Riemannian manifold with Ricci curvature bounded below by \(\operatorname{Ric} \geq (n-1)k g\), leads to certain rigidity results. This is primarily due to the application of the \(P\)-function method, an approach that utilizes a carefully defined \(P\)-function that is subharmonic, allowing one to exploit the classical maximum principle as in Weinberger's technique. For further details, we refer the reader to the original work by \cite{CiraoloVezzoni2019}, as well as subsequent developments in \cite{FR_2022}, \cite{CiraoloRoncoroni2020}, and \cite{FAM_2023}, among others. This framework effectively enables the method to yield significant results, even in the case of spherical domains. 

In this work we extend the analysis of such symmetry characterization of sector-like domains for Riemannian manifolds in some directions. First, motived by \cite{CiraoloRoncoroni2020}, we consider a warped product, this is, $M^{n}= I \times N^{n-1}$ equipped with the metric
$$g=dt^{2}+\rho^{2}(t)g_{N},$$
where $I=[0,R)$,  $R\in (0,\infty]$, $(N,g_{N})$ is a $(n-1)$-dimensional Riemannian manifold and $\rho:I\longrightarrow\mathbb{R}$ is a positive smooth function. As in the standard notation, we denote this structure by $M\doteq I\times_{\rho} N$. We point out that the space forms are classical examples of such structures: by choosing $(N^{n-1},g_{N})=(\mathbb{S}^{n-1},g_{st})$, the standard $(n-1)$-sphere, we obtain
\begin{itemize}
    \item The Euclidean space $\mathbb{R}^{n}$: by doing $\rho(t)=t$  and $I = [0, \infty)$;
    \item The Hyperbolic space $\mathbb{H}^{n}$: by doing $\rho(t)=\sinh (t)$  and $I = [0, \infty)$;
    \item The (upper) hemisphere $\mathbb{S}^{n}_{+}$: by doing $\rho(t)= \sin(t)$  and $I = [0, \pi/2)$.
\end{itemize}

If we denote $O$ the pole of the model, we define an \emph{open cone} $\Sigma$ with vertex at $\left\{O\right\}$ as the set
$$\Sigma \doteq \left\{tx;\\ x\in\omega,\\ t\in I\right\},$$
for some open domain $\omega\subset N$. Moreover, we say that a cone is \emph{convex} if its second fundamental form is nonnegative at every point \( x \in \partial \Sigma \). Here, if \( \nu \) denotes the outward unit normal vector field, the second fundamental form is given by $A(X, Y) = g(\nabla_{X} \nu, Y),$ where \( X \) and \( Y \) are tangent vector fields to \( \partial \Sigma \). In the same way as \cite{CiraoloRoncoroni2020} and \cite{Pacella2020} we have the concept of \emph{sector-like domain} that we define below.

\begin{definicao}\label{def_sec}
Let $\Sigma$ be an open cone as described above, such that $\partial\Sigma \setminus \{O\}$ is smooth. A bounded domain $\Omega \subset \Sigma$ is called a {\it sector-like domain} if  
$$
\Gamma = \partial\Omega \cap \Sigma \quad \mbox{and} \quad \Gamma_{1} = \partial\Omega \setminus \overline{\Gamma},
$$  
are such that the $(n-1)$-Hausdorff measures satisfy $\mathcal{H}_{n-1}(\Gamma) > 0$, $\mathcal{H}_{n-1}(\Gamma_{1}) > 0$, and $\Gamma$ is a smooth $(n-1)$-dimensional manifold, while $\partial\Gamma = \partial\Gamma_{1} \subset \partial\Omega \setminus \{O\}$ is a smooth $(n-2)$-dimensional manifold.
\end{definicao}
    
\begin{figure}[h] 
  \centering
  \includegraphics[scale=0.4]{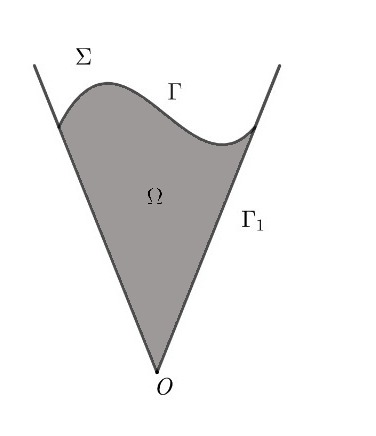}
  \caption{A sector-like domain $\Omega$ inside $\Sigma$ }
  \label{sector-like}   
\end{figure}

For such domains, we study the following overdetermined problem

\begin{eqnarray} \label{SP_cone1}
    \left\lbrace \begin{array}{ccccc}
	\Delta u + nku  & = & -1 & \textrm{ in } & \Omega \\
	%u & > & 0 &  &  \\
	u & > & 0 & \textrm{ in } & \Omega \\
    u & = & 0 & \textrm{ on } & \Gamma \\
        u_{\nu} & = & -c & \textrm{ on } & \Gamma \\
        u_{\nu} & = & 0 & \textrm{ on } & \Gamma_1 \setminus \{O\}
    \end{array} \right. 
\end{eqnarray}

and obtain the following rigidity result
\begin{theoremletter}\label{TeoA}
Let \( M = I \times_{\rho} N \) be a warped product manifold, where \( N \) is an \((n-1)\)-dimensional Riemannian manifold, $I= [0,R)$ is an interval, and  \( \rho' > 0 \) in $I$. Let \(\Sigma\) be a convex cone in \( M \) such that \(\Sigma \setminus \{O\}\) is smooth, and let \(\Omega \subset \Sigma\) be a sector-like domain. Assume that there exists a classical solution 
\[
u \in C^{1}(\Omega \cup \Gamma \cup \Gamma_1 \setminus \{O\}) \cap W^{1,\infty}(\Omega) \cap W^{2,2}(\Omega)
\]
to the problem \eqref{SP_cone1}. If \( \operatorname{Ric} \geq (n-1)k g \) and 
\begin{equation}
    \int_{\Omega} u^2 \left[ \varphi \left( kn(n-1) - R \right) - \frac{1}{2} X(R) \right] dv \geq 0,
    \label{hypothesis}
\end{equation}
then \(\Omega = \Sigma \cap B_{r}(x_0)\), where \( B_{r}(x_0) \) is a metric ball of radius \( r \) centered at \( x_0 \). Here, \( R \) is the scalar curvature of \( M \), \( X = \rho \partial_t \), and \( \varphi = \rho' \).
\end{theoremletter}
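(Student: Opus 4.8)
The plan is to adapt Weinberger's $P$-function method to the warped-product setting with a convex cone, following the strategy of Ciraolo--Roncoroni and Ciraolo--Vezzoni. First I would introduce the $P$-function
\[
P = |\nabla u|^2 + \frac{2}{n}\bigl(1 + nku\bigr)u - (\text{a correction involving } \rho' \text{ or the warping}),
\]
more precisely the natural candidate $P = |\nabla u|^2 + \frac{2}{n}u + 2k u^2$ (so that on the model solution $P$ is constant), and compute $\Delta P$ using the Bochner formula. The Bochner formula gives $\frac12\Delta|\nabla u|^2 = |\mathrm{Hess}\, u|^2 + \langle \nabla u, \nabla \Delta u\rangle + \mathrm{Ric}(\nabla u,\nabla u)$; combining with the equation $\Delta u = -1 - nku$ and the Cauchy--Schwarz inequality $|\mathrm{Hess}\, u|^2 \ge \frac{(\Delta u)^2}{n}$, one gets $\Delta P \ge 2\bigl(\mathrm{Ric}(\nabla u,\nabla u) - (n-1)k|\nabla u|^2\bigr) \ge 0$ using the Ricci lower bound — so $P$ is subharmonic.

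Next I would handle the boundary behavior. On $\Gamma$ (where $u = 0$, $u_\nu = -c$) one computes $P = c^2$; on $\Gamma_1\setminus\{O\}$ (where $u_\nu = 0$) the Hopf-type argument requires controlling $\partial_\nu P$, and this is exactly where convexity of the cone enters: the Neumann condition $u_\nu = 0$ on $\Gamma_1$ forces $\nabla u$ to be tangent to $\partial\Sigma$, and then $\partial_\nu P = -2 A(\nabla u, \nabla u) \le 0$ by convexity of the cone. With $\Delta P \ge 0$ in $\Omega$, $\partial_\nu P \le 0$ on $\Gamma_1$, and the value of $P$ controlled on $\Gamma$, the maximum principle / Hopf lemma forces $P$ to be constant, hence $\mathrm{Hess}\, u = \frac{\Delta u}{n} g = -\frac{1+nku}{n} g$, i.e. $u$ is a "concircular"-type function. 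The crucial extra ingredient, and the source of the curvature hypothesis \eqref{hypothesis}, is that equality in the Bochner step forces $\mathrm{Ric}(\nabla u,\nabla u) = (n-1)k|\nabla u|^2$, and one then uses an integral identity — integrating $P$ (or $u\,\Delta P$, or a Reilly-type formula) over $\Omega$ — to convert the pointwise equality into the statement that the level sets of $u$ are umbilical geodesic spheres centered at a common point $x_0$, with the warped-product structure and the condition \eqref{hypothesis} (which bounds the contribution of the scalar curvature terms $kn(n-1) - R$ and the radial derivative $X(R)$) guaranteeing that the vector field $X = \rho\partial_t$ is, after a conformal-type change, the gradient field compatible with $\nabla u$. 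This identifies $\Omega$ with a geodesic ball intersected with $\Sigma$.

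The main obstacle I anticipate is the integral step where hypothesis \eqref{hypothesis} is actually consumed: in the pure space-form case the scalar curvature is constant and $kn(n-1) - R = 0$, $X(R) = 0$, so the hypothesis is vacuous, but in a general warped product one must produce an integral identity (presumably via the divergence theorem applied to a vector field built from $u$, $\nabla u$, and $X$, together with the boundary conditions and the vanishing of $u_\nu$ on $\Gamma_1$) whose sign is controlled precisely by the left-hand side of \eqref{hypothesis}. Getting the boundary terms on $\Gamma_1$ to vanish or have the right sign — again using convexity — while correctly accounting for the (possibly singular) vertex $\{O\}$ and the Sobolev regularity assumed on $u$ (the $W^{1,\infty}\cap W^{2,2}$ hypotheses are there to justify these integrations by parts near $\{O\}$ and along $\partial\Gamma$) is the delicate technical core. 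Once $P$ is constant and the Ricci equality is upgraded to the full umbilicity of level sets, the rigidity conclusion $\Omega = \Sigma\cap B_r(x_0)$ follows from the standard classification of functions with $\mathrm{Hess}\, u$ proportional to $g$ on a warped product with $\rho' > 0$.
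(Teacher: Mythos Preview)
Your overall strategy matches the paper's --- $P$-function, Bochner inequality, convexity of $\Sigma$ for $\partial_\nu P \le 0$ on $\Gamma_1$, and an integral identity built from the closed conformal field $X=\rho\partial_t$ --- but the logical order is wrong in a way that matters, and this is exactly where hypothesis \eqref{hypothesis} lives. The maximum principle, from $\Delta P\ge 0$ in $\Omega$, $P=c^2$ on $\Gamma$, and $\partial_\nu P\le 0$ on $\Gamma_1$, only yields $P\le c^2$ in $\Omega$; it does \emph{not} by itself force $P$ to be constant. The paper closes this gap by contradiction: if $P<c^2$ somewhere, then since $\varphi=\rho'>0$ one gets the \emph{strict} inequality $c^2\int_\Omega\varphi\,dv>\int_\Omega\varphi P\,dv$, which after expanding $P$ and integrating by parts becomes a strict upper bound for $c^2\int_\Omega\varphi\,dv$ in terms of $\int_\Omega\varphi u$, $\int_\Omega\varphi u^2$, and $\int_\Omega u\langle\nabla\varphi,\nabla u\rangle$. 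This is compared against a Pohozaev-type identity (the integral identity you anticipate: apply the divergence theorem to $\tfrac12|\nabla u|^2 X-\langle X,\nabla u\rangle\nabla u$ and use $-(n-1)\Delta\varphi=\varphi R+\tfrac12 X(R)$), which computes $c^2\int_\Omega\varphi\,dv$ \emph{exactly}. Subtracting the two produces
\[
0>\int_\Omega u^2\Big[\varphi\big(kn(n-1)-R\big)-\tfrac12 X(R)\Big]\,dv,
\]
contradicting \eqref{hypothesis}. So \eqref{hypothesis} is what forces $P\equiv c^2$; it is not consumed afterward to upgrade a Ricci equality to umbilicity, as you suggest.

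Two smaller corrections. First, the correct $P$-function is $|\nabla u|^2+\tfrac{2}{n}u+ku^2$, not $+2ku^2$; with your coefficient the Bochner computation does not reduce to $2\big|\nabla^2u-\tfrac{\Delta u}{n}g\big|^2+2[\operatorname{Ric}-(n-1)kg](\nabla u,\nabla u)$. Second, once $P\equiv c^2$ the equality in Bochner gives $\nabla^2u=(-\tfrac1n-ku)g$ directly, and the conclusion $\Omega=\Sigma\cap B_r(x_0)$ follows from a short Obata-type lemma for this Hessian equation with the mixed boundary conditions $u=0$ on $\Gamma$, $u_\nu=0$ on $\Gamma_1$; there is no separate Reilly formula or level-set umbilicity argument in this part of the paper.
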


A key aspect of demonstrating this result relies on the Pohozaev-type identity stated in Lemma \ref{poho_type}, which is a necessary step to apply the \(P\)-function method and  determine the appropriate condition \eqref{hypothesis}. In particular, we derive the following corollary concerning Einstein warped products, which extends the previous analysis of cones in space forms presented in \cite{CiraoloRoncoroni2020}:

\begin{corolario}\label{coro_Einstein}
Let \( M = I \times_{\rho} N \) be a warped product manifold, where \( N \) is an \((n-1)\)-dimensional Riemannian manifold and $I= [0,\infty)$ is an interval. Suppose that $\rho'$ has a sign, and \( M \) is Einstein with \(\operatorname{Ric} = (n-1)k g\). Let \(\Sigma\) be a convex cone in \( M \) such that \(\Sigma \setminus \{O\}\) is smooth, and suppose that \(\Omega \subset \Sigma\) is a sector-like domain for which there exists a solution \( u \in C^{1}(\Omega \cup \Gamma \cup \Gamma_1 \setminus \{O\}) \cap W^{1,\infty}(\Omega) \cap W^{2,2}(\Omega) \) to the problem \eqref{SP_cone1}. Then \(\Omega = \Sigma \cap B_{r}(x_0)\), where \( B_{r}(x_0) \) is a metric ball of radius \( r \) centered at \( x_0 \).  
\end{corolario}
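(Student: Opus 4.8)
The plan is to deduce Corollary \ref{coro_Einstein} from Theorem \ref{TeoA} by verifying that, for an Einstein warped product with $\operatorname{Ric} = (n-1)k g$, the integral hypothesis \eqref{hypothesis} holds automatically. The first observation is that the Einstein condition forces the scalar curvature $R$ to be the constant $R = n(n-1)k$; hence the term $kn(n-1) - R$ in \eqref{hypothesis} vanishes identically, and since $R$ is constant we also have $X(R) = \rho\,\partial_t R = 0$. Therefore the bracket in \eqref{hypothesis} is identically zero, the integral equals zero, and in particular $\int_\Omega u^2[\varphi(kn(n-1)-R) - \tfrac12 X(R)]\,dv = 0 \geq 0$, so the hypothesis of Theorem \ref{TeoA} is satisfied. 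The only remaining point is the sign condition on $\rho'$: Theorem \ref{TeoA} is stated with $\rho' > 0$, while the corollary only assumes $\rho'$ has a sign, so I would note that if $\rho' < 0$ one reparametrizes the interval (replacing $t$ by an orientation-reversing substitution, which turns $\rho'$ positive) without changing the geometry of $\Sigma$, $\Omega$, or the PDE \eqref{SP_cone1}; the case $\rho' \equiv 0$ is excluded since $\rho$ is a positive function on a half-line and, together with the Einstein equation, would be incompatible (or can be ruled out directly from the structure equations). The conclusion $\Omega = \Sigma \cap B_r(x_0)$ then follows immediately from Theorem \ref{TeoA}.

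I would structure the write-up as follows. First, recall the standard formula for the scalar curvature of a warped product $M = I \times_\rho N$ in terms of $\rho$, $\rho'$, $\rho''$ and the scalar curvature of $N$, and combine it with the Ricci identities for warped products to show that $\operatorname{Ric} = (n-1)k g$ implies both that $N$ is itself Einstein with the appropriate constant and that $\rho$ satisfies the ODE $\rho'' = -k\rho$ (or its sign-variants); from this one reads off $R \equiv n(n-1)k$. Second, substitute $R = n(n-1)k$ into \eqref{hypothesis}: the coefficient $\varphi(kn(n-1) - R)$ vanishes, and $X(R) = \rho\,\partial_t\big(n(n-1)k\big) = 0$, so \eqref{hypothesis} holds with equality. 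Third, invoke Theorem \ref{TeoA}, after the harmless reparametrization in the case $\rho' < 0$, to conclude.

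The main obstacle — really the only nontrivial point — is the bookkeeping needed to confirm that the Einstein condition genuinely yields $R$ constant in the precise normalization used in \eqref{hypothesis}, i.e. that ``$\operatorname{Ric} = (n-1)kg$'' is the same $k$ that appears in the equation $\Delta u + nku = -1$ and in the curvature bound $\operatorname{Ric} \geq (n-1)kg$ of Theorem \ref{TeoA}; once the normalizations are aligned, the scalar curvature is $R = \operatorname{tr}\operatorname{Ric} = n(n-1)k$ and everything collapses. A secondary, mild subtlety is justifying the reduction from ``$\rho'$ has a sign'' to ``$\rho' > 0$'': since $I = [0,\infty)$ and $\rho > 0$, the substitution $t \mapsto -t$ is not available on a half-line, so instead I would observe that under the Einstein constraint $\rho'' = -k\rho$ on $[0,\infty)$ with $\rho>0$, the only possibilities are $k \le 0$, and in each case one checks directly that $\rho' > 0$ can be arranged (for $k<0$, $\rho(t) = a\sinh(\sqrt{-k}\,t) + b\cosh(\sqrt{-k}\,t)$ with $\rho>0$ on $[0,\infty)$ forces $\rho'>0$; for $k=0$, $\rho$ is affine and positive on $[0,\infty)$, hence non-decreasing, and the strictly increasing case is the relevant one while the constant case is the product metric, handled separately or excluded), so the hypothesis $\rho' > 0$ of Theorem \ref{TeoA} is in fact automatic and no reparametrization is needed. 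With these normalization checks in place, the corollary is an immediate consequence of Theorem \ref{TeoA}.
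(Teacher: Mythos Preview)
Your core reduction is exactly the paper's: from $\operatorname{Ric}=(n-1)kg$ you get $R=n(n-1)k$ constant, hence $X(R)=0$ and the bracket in \eqref{hypothesis} vanishes identically, so \eqref{hypothesis} holds with equality and Theorem~\ref{TeoA} applies. That part is correct and complete; the extra paragraph deriving $\rho''=-k\rho$ and the Einstein condition on $N$ is unnecessary, since $R=\operatorname{tr}\operatorname{Ric}=n(n-1)k$ is immediate from the hypothesis.

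Where you diverge from the paper is in handling the sign of $\rho'$, and this is where a gap appears. Your claim that for $k<0$ the conditions $\rho>0$ on $[0,\infty)$ and $\rho'$ of constant sign force $\rho'>0$ is false: take $\rho(t)=e^{-\sqrt{-k}\,t}$ (i.e.\ $a=-1$, $b=1$ in your notation), which is positive on $[0,\infty)$, solves $\rho''=-k\rho$, and has $\rho'<0$ everywhere. So the case $\rho'<0$ genuinely needs to be covered, and your reparametrization idea, as you yourself note, does not work on a half-line. The paper's fix is much simpler and avoids all of this case analysis: Remark~\ref{rem_sign} observes that the proof of Theorem~\ref{TeoA} goes through verbatim when $\rho'<0$, the only change being that \eqref{hypothesis} must hold with the reversed inequality. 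Since in the Einstein situation the integrand is identically zero, \emph{equality} holds in \eqref{hypothesis}, and therefore both the inequality and its reverse are satisfied; the conclusion follows regardless of the sign of $\rho'$. Replace your final paragraph with this one-line observation and the proof is complete.
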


In continuation with our results, it is well known that the overdetermined problem \eqref{firstSP} is closely related to the characterization of compact constant mean curvature (CMC) surfaces without boundary. In this direction, the celebrated Alexandrov’s theorem (see \cite{Alexandrov1}) establishes that the only embedded compact CMC surfaces are spheres. While the method of moving planes was firstly used to prove this result—what subsequently inspired Serrin's original approach in \cite{Serrin1971}—an alternative proof by Reilly and Ros (see \cite{Reilly1977} and \cite{ros}) employs integral methods to study a related Dirichlet problem in a domain whose boundary is the given CMC surface. This integral approach has proven to be particularly interesting for extending such results to more general Riemannian manifolds (see, for example, \cite{brendle}, \cite{Qiu&Xia}, \cite{Fogagnolo}, \cite{hizagi}, and \cite{jia}, \cite{Li&Xia}) and also for sector-like domains in the own Euclidean space. In this last direction, \cite{Pacella2020} characterizes constant mean curvature compact surfaces with boundary which satisfy a gluing condition with respect to the cone $\Sigma$, proving that if either the cone is convex or the surface is a radial graph then $\Gamma$ must be a spherical cap. We highlight that the literature about such problems in cone type domains is vast and have very recent developments. In \cite{lions}, the study of isoperimetric inequalities in convex cones was initiated. The work \cite{Cabre} addresses isoperimetric inequalities in convex cones with homogeneous weights. In \cite{choe}, soap bubble-type results in convex cones are explored. Finally, \cite{poggesi} provides both soap bubble-type and Serrin-type results in possibly non-smooth cones.

In this work, we also contribute to the establishment of a Heintze-Karcher inequality and a soap bubble result in sector-like domains within warped products. In this setting, we consider domains that satisfy the following problem:

\begin{eqnarray} \label{SPSB}
     \left\lbrace \begin{array}{ccccc}
	\Delta u + nku  & = & -1 & \textrm{ in } & \Omega \\
	%u & > & 0 &  &  \\
	u & > & 0 & \textrm{ in } & \Omega \\
    u & = & 0 & \textrm{ on } & \Gamma \\
        u_{\nu} & = & 0 & \textrm{ on } & \Gamma_1 \setminus \{O\}
    \end{array} \right.
\end{eqnarray}

In the following result, we consider the value 
\begin{equation}\label{c_value}
c \doteq \frac{1}{Area(\Gamma)}\left(Vol(\Omega)+nk\int_{\Omega}u\,dv\right)
\end{equation}
While, in general, it is expected that \( c \) depends on the solution \( u \), the case \( k = 0 \) is particularly meaningful and provides a parallel with \cite[Theorem 2.6]{poggesi} (see Remark \ref{rem_k=0}).

\begin{theoremletter}\label{TeoB}
Let \( M = I \times_{\rho} N \) be a warped product manifold, where \( N \) is an \((n-1)\)-dimensional Riemannian manifold and $I=[0,\infty)$ is an interval. Let \(\Sigma\) be a convex cone in \( M \) such that \(\Sigma \setminus \{O\}\) is smooth, and let \(\Omega \subset \Sigma\) be a sector-like domain such that exists a solution $u \in C^{1}(\Omega \cup \Gamma \cup \Gamma_1\setminus\{O\} ) \cap W^{1,\infty}(\Omega) \cap W^{2,2}(\Omega)$ to the problem \eqref{SPSB}. If $Ric \geq (n-1)kg,$ for some $k \in \Rset$, then  
    \begin{eqnarray*}
        \int_{\Gamma}^{} (H_0 - H)(u_{\nu})^2 da\geq 0,
    \end{eqnarray*}
    where $H$ is the mean curvature of $\Gamma$ and $H_0 = \frac{n-1}{nc}$ with $c$ constant given by \eqref{c_value}. In particular, if $H \geq H_0$ on $\Gamma$ then $\Omega = \Sigma \cap B_{r}(x_0)$ where $B_{r}(x_0)$ is a metric ball.
\end{theoremletter}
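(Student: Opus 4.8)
The plan is to run a Reilly-type integral argument adapted to the warped product $M = I \times_\rho N$ and the partially overdetermined problem \eqref{SPSB}, in the spirit of the Reilly--Ros approach to Alexandrov's theorem and its refinement by Heintze--Karcher. First I would invoke the Bochner--Reilly formula for the solution $u$ of $\Delta u + nku = -1$ on $\Omega$: integrating $\Delta u$ against $|\nabla u|^2$-type quantities and using $\mathrm{Ric} \geq (n-1)kg$ one obtains, after the usual manipulations, an inequality of the form
\begin{equation*}
\int_\Omega \Bigl(\tfrac{(\Delta u)^2}{n} - |\nabla^2 u|^2 - \mathrm{Ric}(\nabla u,\nabla u)\Bigr)\,dv \le 0,
\end{equation*}
whose left-hand side, via the divergence theorem, equals a boundary integral over $\partial\Omega = \Gamma \cup \Gamma_1$. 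On $\Gamma_1 \setminus \{O\}$ the Neumann condition $u_\nu = 0$ together with the convexity of the cone $\Sigma$ (nonnegativity of its second fundamental form) makes the corresponding boundary term have a favorable sign, so it can be discarded; this is exactly where the hypothesis that $\Sigma$ is convex enters, and it is the analogue of the boundary contribution vanishing in the classical ball case. On $\Gamma$, where $u = 0$ and $u_\nu$ is (a priori non-constant but) sign-definite, the boundary term produces $\int_\Gamma H (u_\nu)^2\,da$ up to controlled terms, using that the tangential Hessian of $u$ on $\Gamma$ is governed by $u_\nu$ times the second fundamental form of $\Gamma$.

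Next I would compute the ``model'' constant: applying the divergence theorem directly to $\Delta u + nku = -1$ over $\Omega$ and using the boundary conditions gives
\begin{equation*}
\int_\Gamma u_\nu\,da = -\bigl(\mathrm{Vol}(\Omega) + nk\textstyle\int_\Omega u\,dv\bigr) = -c\,\mathrm{Area}(\Gamma),
\end{equation*}
which is precisely the definition \eqref{c_value} of $c$, and it identifies $H_0 = \frac{n-1}{nc}$ as the mean curvature that a spherical cap $\partial B_r(x_0) \cap \Sigma$ would have. Combining the Reilly inequality, the discarded convex-cone term, and a Cauchy--Schwarz/Newton inequality $|\nabla^2 u|^2 \ge (\Delta u)^2/n$ with the algebra linking $u_\nu$ on $\Gamma$ to $H$ and $H_0$, one should arrive exactly at
\begin{equation*}
\int_\Gamma (H_0 - H)(u_\nu)^2\,da \ge 0.
\end{equation*}
For the rigidity statement, assuming $H \ge H_0$ on $\Gamma$ forces equality throughout: the Newton inequality becomes an equality, so $\nabla^2 u = \frac{\Delta u}{n}g = -\frac{1+nku}{n}g$ on $\Omega$, i.e. $u$ has umbilical Hessian. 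A standard ODE/warped-product argument (as in \cite{CiraoloRoncoroni2020} and the references on Obata-type equations) then shows that the level sets of $u$ are geodesic spheres centered at a point $x_0$ where $\nabla u$ vanishes, whence $\Gamma$ is a piece of $\partial B_r(x_0)$ and, together with the cone structure, $\Omega = \Sigma \cap B_r(x_0)$.

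The main obstacle I anticipate is the careful treatment of the boundary and corner terms: the edge $\partial\Gamma = \partial\Gamma_1$ is only a Lipschitz-type corner of $\partial\Omega$, the regularity assumed is merely $u \in C^1(\Omega \cup \Gamma \cup \Gamma_1 \setminus\{O\}) \cap W^{1,\infty} \cap W^{2,2}$, and the vertex $O$ must be excised and handled by an approximation/capacity argument to ensure no boundary mass concentrates there. Making the integration by parts rigorous in this setting -- justifying that the edge contributions drop out (they should, because along $\partial\Gamma$ the gluing of $\Gamma$ into the convex cone is compatible) and that the $W^{2,2}$ regularity suffices for the Bochner--Reilly identity -- is the technical heart of the argument; the differential-geometric identities (Reilly formula, convexity sign, Newton inequality) are otherwise standard once the warped-product curvature terms are correctly bookkept against $\mathrm{Ric} \ge (n-1)kg$.
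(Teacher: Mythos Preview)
Your proposal is correct and follows essentially the same route as the paper: the paper applies Reilly's identity to the solution of \eqref{SPSB}, drops the $\Gamma_1$-term using convexity of $\Sigma$ and the Ricci lower bound to obtain $\int_\Omega |\mathring{\nabla}^2 u|^2\,dv \le -\tfrac{1}{n}\int_\Gamma u_\nu[(n-1)+nHu_\nu]\,da$, then rewrites the right-hand side via the completing-the-square identity $H_0\int_\Gamma u_\nu^2 = H_0\int_\Gamma (u_\nu+c)^2 - 2cH_0\int_\Gamma u_\nu - c^2H_0\,\mathrm{Area}(\Gamma)$ together with $\int_\Gamma u_\nu = -c\,\mathrm{Area}(\Gamma)$, arriving at $\int_\Omega |\mathring{\nabla}^2 u|^2 + \tfrac{n-1}{nc}\int_\Gamma (u_\nu+c)^2 \le \int_\Gamma (H_0-H)u_\nu^2$; rigidity then follows from $\mathring{\nabla}^2 u = 0$ and the Obata-type Lemma~\ref{rigidez_Tipo_Obata}. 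The ``Cauchy--Schwarz'' step you allude to is exactly this completing-the-square, and the corner/regularity issues you flag are handled in the paper by the approximation arguments cited from \cite{Pacella2020} and \cite{CiraoloRoncoroni2020}.
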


The last part of this paper analyses another rigidity result in the context of sector-like domains, related to an overdetermined problem for the drift Laplacian, as follows. Recall that if $f$ is a smooth positive function, the {\it drift Laplacian} (or {\it weighted Laplacian}) is the operator defined by
\[
\Delta_{f} \doteq \Delta + \langle \nabla \log f, \nabla \cdot \rangle.
\]
In \cite{Ruan2018}, the authors obtained a Serrin-type result related to the problem \eqref{firstSP}, where $\Omega \subset \mathbb{R}^{n}$ is a Euclidean domain and the Laplacian is replaced by the above-mentioned operator. Furthermore, $f$ is a homogeneous weight of degree \(\alpha > 0\), which means that \(\langle \nabla f, x \rangle = \alpha f\), where \(x\) is the position vector. In this context, under both Dirichlet and Neumann constant boundary conditions, and with the weight function $f$ being a homogeneous function, the domain $\Omega$ is necessarily a ball. As in the classical problem, such overdetermined problems are related to Alexandrov-type results, but now considering the weighted mean curvature, where the hypothesis of a homogeneous weight is commonly used (see \cite{batista} and \cite{CC_2014}). See also \cite{AFS2024} for some analogous results in generalized cones.

Motivated by the findings in \cite{Ruan2018}, we study the overdetermined problem
\begin{eqnarray} \label{SPRuan1}
    \left\lbrace \begin{array}{ccccc}
	\Delta_f u  & = & -1 & \textrm{ in } & \Omega \\
	%u & > & 0 &  &  \\
	u & = & 0 & \textrm{ on } & \Gamma \\
        u_{\nu} & = & -c & \textrm{ on } & \Gamma \\
	    u_{\nu} & = & 0 & \textrm{ on } & \Gamma_1 \setminus \{O\} \\
        \nabla^2 \log f (\nabla u, \nabla u) + \displaystyle\frac{\< \nabla \log f, \nabla u \>^2 }{\alpha} &\leq & 0 & \textrm{ in } & \Omega
    \end{array} \right. 
\end{eqnarray}
in a sector-like domain $\Omega$ contained in a convex cone $\Sigma\subset\mathbb{R}^n$ and obtaining the following result. 

\begin{theoremletter}\label{teoC}
     Let $\Sigma$ a convex open cone in $\Rset^n$ such that $\Sigma \setminus \{O\}$ is smooth and $\Omega \subset \Sigma$ a sector-like domain. If $f$ is a homogeneous weight of degree \(\alpha > 0\) and there exists a solution $u \in C^{1}(\Omega \cup \Gamma \cup \Gamma_1\setminus\{O\} ) \cap W^{1,\infty}(\Omega) \cap W^{2,2}(\Omega)$ to the problem \eqref{SPRuan1}, then $\Omega = \Sigma \cap B_{r}(x_0)$,where $B_{r}(x_0)$ is a ball, and $u=r^2-\frac{|x|^2}{2(n+\alpha)}.$
\end{theoremletter}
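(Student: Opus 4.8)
The plan is to adapt Weinberger's $P$-function technique to the weighted setting, following the strategy used in \cite{Ruan2018} and \cite{CiraoloRoncoroni2020} but carefully tracking the cone boundary term $\Gamma_1$ and the role of the homogeneity of $f$. First I would introduce the auxiliary function
\[
P \doteq |\nabla u|^2 + \frac{2}{n+\alpha}\,u,
\]
which is the natural $f$-analogue of Weinberger's $P$-function (the constant $\frac{1}{n+\alpha}$ replaces Weinberger's $\frac{1}{n}$ because the model solution is $u=\frac{r^2-|x|^2}{2(n+\alpha)}$). The first step is a Bochner-type computation for $\Delta_f P$: using $\Delta_f u = -1$, the weighted Bochner formula, and the Cauchy–Schwarz inequality $|\nabla^2 u|^2 \ge \frac{(\Delta u)^2}{n}$ together with the fifth (differential-inequality) condition in \eqref{SPRuan1}, I expect to obtain $\Delta_f P \ge 0$ in $\Omega$, i.e. $P$ is $f$-subharmonic. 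The algebra here is where the hypothesis $\nabla^2\log f(\nabla u,\nabla u)+\frac{\langle\nabla\log f,\nabla u\rangle^2}{\alpha}\le 0$ gets consumed; one must split $\Delta u = \Delta_f u - \langle\nabla\log f,\nabla u\rangle$ and complete the square so that the leftover terms are exactly controlled by that inequality.

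The second step is to establish a Pohozaev-type / Rellich–Nečas integral identity adapted to the cone, analogous to Lemma \ref{poho_type}, in order to pin down the value of $c$ and to show that the $f$-mean value of $P$ over $\Omega$ equals its boundary value $c^2$ on $\Gamma$. Concretely, I would integrate $\mathrm{div}(f\nabla P)$ (or test the equation against suitable vector fields including the position vector $x$, exploiting $\langle\nabla f,x\rangle=\alpha f$), using the Neumann condition $u_\nu=0$ on $\Gamma_1$ and the convexity of $\Sigma$. The convexity enters because the boundary integral over $\Gamma_1$ produces a term involving the second fundamental form $A$ of $\partial\Sigma$ applied to $\nabla u$ (which is tangent to $\partial\Sigma$ there, since $u_\nu=0$), and $A\ge 0$ gives that term a favorable sign. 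Combined with $f$-subharmonicity and the maximum principle (the Hopf lemma on $\Gamma$, where $P=c^2$), this forces $P\equiv c^2$ throughout $\Omega$.

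Once $P$ is constant, equality must hold in every inequality used: in particular $\nabla^2 u = \frac{\Delta u}{n}\,g = -\frac{1}{n+\alpha}\,g$ (using also that equality in the $f$-Bochner step forces $\langle\nabla\log f,\nabla u\rangle$ to be the appropriate multiple, so that $\Delta u = -\frac{n}{n+\alpha}$), and equality in the convexity term forces $A(\nabla u,\nabla u)=0$ on $\Gamma_1$. From $\nabla^2 u = -\frac{1}{n+\alpha}g$ one integrates along geodesics: $u$ has the form $a - \frac{1}{2(n+\alpha)}\,d(\cdot,x_0)^2$ for the unique critical point $x_0$ of $u$, so the level set $\Gamma=\{u=0\}$ is a metric sphere $\partial B_r(x_0)$ with $r^2 = 2(n+\alpha)a$, giving $u = r^2 - \frac{|x-x_0|^2}{2(n+\alpha)}$ after normalizing; and the Neumann/convexity rigidity forces $\Omega$ to be exactly the sector $\Sigma\cap B_r(x_0)$ with $x_0$ the vertex $O$ (a Hessian-obstruction argument shows the center must sit at the cone vertex, otherwise $u_\nu=-c\neq0$ cannot hold along all of $\Gamma$ while $u_\nu=0$ on $\Gamma_1$).

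The main obstacle I anticipate is the Bochner step: verifying that the precise weighted identity for $\Delta_f P$, after substituting $\Delta_f u=-1$ and regrouping the $\nabla\log f$ cross-terms, is dominated below by exactly the quantity appearing in the fifth condition of \eqref{SPRuan1} — i.e., that the chosen coefficient $\frac{2}{n+\alpha}$ in $P$ is the unique one making the Cauchy–Schwarz slack and the $\log f$ slack cancel simultaneously. A secondary technical point is the low regularity near the vertex $O$: one must justify all integrations by parts on $\Omega$ given only $u\in C^1\cap W^{1,\infty}\cap W^{2,2}$, which I would handle by an exhaustion/cutoff argument removing a neighborhood of $O$ and using that $\mathcal H_{n-1}$ of the relevant pieces is finite, as in \cite{Pacella2020} and \cite{CiraoloRoncoroni2020}.
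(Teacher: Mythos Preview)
Your plan is correct and uses the same three ingredients as the paper --- the weighted Bochner inequality (which is exactly the paper's Corollary~\ref{coro-Bochner}: $\tfrac12\Delta_f|\nabla u|^2\ge \tfrac{1}{n+\alpha}$, equivalently $\Delta_f P\ge 0$), the convexity of $\Sigma$ to control the $\Gamma_1$ boundary term, and a Pohozaev-type identity built from the position field $x$. The packaging differs slightly: you set up $P=|\nabla u|^2+\tfrac{2}{n+\alpha}u$, prove the pointwise bound $P\le c^2$ via the $(P-c^2)^+$ test-function argument, and then force equality by showing $\int_\Omega P\,dv_f=c^2\,\mathrm{Vol}_f(\Omega)$; the paper instead integrates the Bochner inequality against $u$ to get directly $(1+\tfrac{2}{n+\alpha})\int_\Omega u\,dv_f\le c^2\,\mathrm{Vol}_f(\Omega)$ (equivalent to your mean-value statement, since $\int|\nabla u|^2\,dv_f=\int u\,dv_f$), and then proves equality via the clean identity $\Delta_f\langle x,\nabla u\rangle=-2$ (Corollary~\ref{coro-eucl}, from Lemma~\ref{f_lapl} plus $\langle\nabla\log f,x\rangle=\alpha$). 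Your route buys a pointwise statement $P\le c^2$ along the way; the paper's route avoids the maximum-principle step entirely and goes straight to the integral equality.

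Two small corrections. First, the ``Hopf lemma'' is not the mechanism that yields $P\equiv c^2$; what you actually need (and what you also describe) is the $(P-c^2)^+$ integration to get $P\le c^2$, and then the Pohozaev identity to match the mean. Second, your final claim that a ``Hessian-obstruction argument'' forces the center $x_0$ to be the vertex $O$ goes beyond what the paper asserts or proves: the paper only invokes Lemma~\ref{rigidez_Tipo_Obata} to conclude $\Omega=\Sigma\cap B_r(x_0)$ for \emph{some} $x_0$, and in general (e.g.\ when $\Sigma$ is a half-space) $x_0$ need not be $O$. The condition you are using, $\langle x_0,\nu\rangle=0$ for every normal $\nu$ occurring along $\Gamma_1$, pins down $x_0$ only up to the linear span of $\partial\Sigma$.
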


\textbf{Overview of the paper:} In Section 2, we recall the basic definitions and preliminary results necessary to prove Theorem \ref{TeoA} and its consequences, particularly establishing a Pohozaev-type identity in the context of cones. In Section 3, we derive a Soap Bubble result and a Heintze-Karcher inequality related to the problem, particularly proving Theorem \ref{TeoB}. Finally, in Section 4, we study the drift Laplacian to obtain a rigidity characterization for an overdetermined problem for cones in the Euclidean space.
%%%%%%%%%%%%%%%%%%%%%%
%%%%%%%%% Rigidity in convex cones
%%%%%%%%%%%%%%%%%%%%%%
\section{Rigidity for a Serrin's type problem in convex cones}

Throughout this section, we consider a warped product $M^n = I \times_{\rho} N^{n-1}$ equipped with the metric  
\[
g = dt^2 + \rho^2(t) g_N,
\]  
where $I= [0, \infty)$ is an interval, $(N, g_N)$ is an $(n-1)$-dimensional Riemannian manifold, and $\rho : I \to \mathbb{R}$ is a positive smooth function. 

We denote by $\Sigma$ the open cone with vertex at $\{O\}$, defined as  
\[
\Sigma \doteq \{t x \mid x \in \omega,\, t \in I\},  
\]  
for some open domain $\omega \subset N$. Furthermore, $\Omega$ is a sector-like domain with boundary $\partial \Omega = \Gamma \cup \Gamma_1$, as in Definition \ref{def_sec} (see also Figure \ref{sector-like}). As before, we consider the following problem:  

\begin{eqnarray} \label{SP}
    \left\lbrace \begin{array}{ccccc}
	\Delta u + n k u  & = & -1 & \textrm{ in } & \Omega, \\
	u & > & 0 & \textrm{ in } & \Omega,
    \end{array} \right. 
\end{eqnarray}
with boundary conditions  
\begin{eqnarray} \label{SPBC}
    \left\lbrace \begin{array}{ccccc}
	u & = & 0 & \textrm{ on } & \Gamma, \\
        u_{\nu} & = & -c & \textrm{ on } & \Gamma, \\
        u_{\nu} & = & 0 & \textrm{ on } & \Gamma_1 \setminus \{O\}.
    \end{array} \right. 
\end{eqnarray}

\begin{observacao}
In order to apply the divergence theorem and investigate the validity of maximum principles in sector-like domains (where there is a lack of regularity at $O$ and along $\partial \Gamma$), we assume in our results that  
\[
u \in C^1(\Omega \cup \Gamma \cup \Gamma_1 \setminus \{O\}) \cap W^{1,\infty}(\Omega) \cap W^{2,2}(\Omega).
\]
See \cite[Lemma 2.1, Corollary 2.3]{Pacella2020} and \cite[Lemma 6]{CiraoloRoncoroni2020} for the approximation arguments, which are also valid in our situation.
\end{observacao}

Our first result, which is of independent interest, is a Pohozaev-type formula associated with the problem \eqref{SP}-\eqref{SPBC}. While the nature of this formula is similar to that of \cite[Lemma 3]{FAM_2023} and \cite[Proposition 7]{FR_2022}, we provide a detailed proof to emphasize the striking resemblance between the derivation of this formula and the genesis of this new type of domain and boundary conditions. This approach clarifies the importance of the coupled overdetermined conditions on the boundaries $\Gamma$ and $\Gamma_{0}$. As in \cite{FR_2022}, we also emphasize (and explicitly denote in the formula) the key property required to obtain an identity of this type: the existence of a closed conformal vector field on $M$. In this context, $X = \rho \partial_t$ is a vector field satisfying  
\[
\nabla_Y X = \varphi Y,
\]  
for all vector field $Y$ in $M$, where $\varphi = \rho'$ is known as the conformal factor. We adopt this notation from now on.
\begin{lema}\label{poho_type}
Let \( M = I \times_{\rho} N \) be a warped product manifold, where \( N \) is an \((n-1)\)-dimensional Riemannian manifold, $I= [0,\infty)$. If $u$ is a solution of \eqref{SP}-\eqref{SPBC}, then
 \begin{eqnarray*}
       c^2 \int_{\Omega}^{} \rho'\;dv = \Big( \frac{n+2}{n} \Big) \int_{\Omega}^{} \rho' u \;dv + 2k \int_{\Omega}^{} \rho' u^2 \;dv +  \frac{n-2}{2n(n-1)} \int_{\Omega}^{} \Big(\rho' R+\frac{X(R)}{2}\Big)u^2  \;dv,
    \end{eqnarray*}   
    where $X=\rho\partial_{t}$ and $R$ is the scalar curvature of $M$.
\end{lema}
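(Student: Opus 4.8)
The plan is to prove the identity by a Rellich--Pohozaev argument adapted to the conical boundary: I test the equation $\Delta u+nku=-1$ against the two scalar functions naturally attached to the closed conformal field $X=\rho\,\partial_t$, namely $Xu=\langle X,\nabla u\rangle$ and $\varphi u$, integrate by parts over $\Omega$, and combine the outcomes. Since $\Omega$ fails to be smooth at $O$ and along $\partial\Gamma$, all the integrations by parts below are to be read in the approximated sense of \cite[Lemma 2.1, Corollary 2.3]{Pacella2020} and \cite[Lemma 6]{CiraoloRoncoroni2020}. The geometric facts I will use throughout are: $\nabla_YX=\varphi Y$ and $\operatorname{div}X=n\varphi$; the fields $X$ and $\nabla\varphi=\rho''\,\partial_t$ are tangent to the rays generating $\partial\Sigma$, so that $\langle X,\nu\rangle=0$ and $\langle\nabla\varphi,\nu\rangle=0$ along $\Gamma_1$; and, consequently, $\int_\Gamma\langle X,\nu\rangle\,da=\int_\Omega\operatorname{div}X\,dv=n\int_\Omega\rho'\,dv$ by the divergence theorem.

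First I would record the ``energy'' consequences of the boundary conditions. Applying the divergence theorem to $uX$ and to $u^2X$ (the boundary terms vanish because $u=0$ on $\Gamma$ and $\langle X,\nu\rangle=0$ on $\Gamma_1$) gives $\int_\Omega Xu\,dv=-n\int_\Omega\rho'u\,dv$ and $\int_\Omega(Xu)(nku)\,dv=-\tfrac{n^2k}{2}\int_\Omega\rho'u^2\,dv$. Then I carry out the core computation: integrating $\int_\Omega(Xu)\Delta u\,dv$ by parts, the conformal relation $\nabla_YX=\varphi Y$ gives $\langle\nabla(Xu),\nabla u\rangle=\varphi|\nabla u|^2+\tfrac12 X|\nabla u|^2$; handling $\int_\Omega X|\nabla u|^2\,dv$ with $\operatorname{div}X=n\varphi$ and using $u=0,\ \nabla u=-c\,\nu$ on $\Gamma$ and $u_\nu=0$ on $\Gamma_1$, all boundary contributions collapse to multiples of $\int_\Gamma\langle X,\nu\rangle\,da=n\int_\Omega\rho'\,dv$, and one obtains
\[
\int_\Omega(Xu)\Delta u\,dv=\frac{nc^2}{2}\int_\Omega\rho'\,dv+\frac{n-2}{2}\int_\Omega\rho'|\nabla u|^2\,dv .
\]
Adding the $nku$ piece and equating with $-\int_\Omega Xu\,dv=n\int_\Omega\rho'u\,dv$ yields a first relation among $\int_\Omega\rho'\,dv$, $\int_\Omega\rho'u\,dv$, $\int_\Omega\rho'u^2\,dv$ and $\int_\Omega\rho'|\nabla u|^2\,dv$.

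To eliminate the gradient term I would test the equation against $\varphi u$: expanding $\int_\Omega\varphi u(\Delta u+nku)\,dv=-\int_\Omega\varphi u\,dv$ by parts (the boundary terms vanish because $u=0$ on $\Gamma$ and $u_\nu=\langle\nabla\varphi,\nu\rangle=0$ on $\Gamma_1$) and using $\int_\Omega u\langle\nabla\varphi,\nabla u\rangle\,dv=-\tfrac12\int_\Omega u^2\Delta\varphi\,dv$, I obtain
\[
\int_\Omega\rho'|\nabla u|^2\,dv=\int_\Omega\rho'u\,dv+\tfrac12\int_\Omega u^2\Delta\rho'\,dv+nk\int_\Omega\rho'u^2\,dv .
\]
Substituting this into the first relation and collecting the $\rho'u$-terms and the $k$-terms, I reach
\[
c^2\int_\Omega\rho'\,dv=\frac{n+2}{n}\int_\Omega\rho'u\,dv+2k\int_\Omega\rho'u^2\,dv-\frac{n-2}{2n}\int_\Omega u^2\,\Delta\rho'\,dv .
\]

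The last step, which is the conceptual heart, is to convert $\Delta\rho'$ into curvature. Contracting the standard identity $R(Z,Y)X=(Z\varphi)Y-(Y\varphi)Z$ valid for a closed conformal field gives $\operatorname{Ric}(X,\cdot)=-(n-1)\nabla\varphi$; taking one further divergence and invoking the contracted second Bianchi identity $\operatorname{div}\operatorname{Ric}=\tfrac12\nabla R$ together with $\nabla_YX=\varphi Y$, I arrive at the pointwise relation $(n-1)\Delta\varphi+\varphi R+\tfrac12 X(R)=0$, i.e.\ $\Delta\rho'=-\tfrac{1}{n-1}\big(\rho'R+\tfrac12 X(R)\big)$. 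Inserting this into the displayed formula turns the last integral into $\tfrac{n-2}{2n(n-1)}\int_\Omega\big(\rho'R+\tfrac12 X(R)\big)u^2\,dv$, which is exactly the asserted identity. I expect the only real difficulty to be the bookkeeping: one must check that every boundary integral over $\Gamma_1$ carries one of the vanishing factors $\langle X,\nu\rangle$, $u_\nu$ or $\langle\nabla\varphi,\nu\rangle$, and one must recognize the closed-conformal identity $(n-1)\Delta\varphi+\varphi R+\tfrac12 X(R)=0$ as the right device to exhibit the curvature term; the space-form models $\rho(t)=t,\sinh t,\sin t$ provide a convenient consistency check for both the identity and the final formula.
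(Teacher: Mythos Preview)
Your proposal is correct and follows essentially the same route as the paper's proof: the paper packages the first step as the divergence identity for the Rellich vector field $\tfrac{|\nabla u|^2}{2}X-\langle X,\nabla u\rangle\nabla u$, which is exactly the integrated form of your multiplier computation with $Xu$, and the elimination of $\int_\Omega\rho'|\nabla u|^2$ via $\operatorname{div}(\varphi u\nabla u)$ and the curvature identity $-(n-1)\Delta\varphi=\varphi R+\tfrac12 X(R)$ are identical in both arguments.
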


\begin{proof}

We recall that since $X=\rho \partial_{t}$ is a closed conformal vector field, then $\Div X=n\varphi$, where $X=\rho'$. A straightforward calculation shows that
%this is $\nabla_{Y}X=\varphi Y$, for all $Y\in\mathfrak{X}(M)$. In particular, $\Div X=n\varphi$. Then, we have
 \begin{eqnarray*}
\Div \Big( \frac{|\nabla u|^2}{2}X - \< X, \nabla u \> \nabla u \Big)\!\!\!\!&=&\!\!\!\! \frac{|\nabla u|^2}{2} \Div(X) + \< \nabla \Big( \frac{|\nabla u|^2}{2} \Big), X \> - \< X, \nabla u \> \Delta u - \< \nabla \< X, \nabla u \>, \nabla u \> \\
     &=& \Big( \frac{n-2}{2} \Big) \varphi |\nabla u|^2 - \< X, \nabla u \> \Delta u.
 \end{eqnarray*}

  Since  $u_\nu=\< X , \nu \> = 0$ on $\Gamma_1 \setminus \{O\}$ it follows from the divergence theorem that 
    \begin{eqnarray*}\label{diverpoh}
        \int_{\Omega}^{} \Div \Big( \frac{|\nabla u|^2}{2}X - \< X, \nabla u \> \nabla u \Big) \;dv 
        &=& - \frac{c^2}{2} \int_{\partial\Omega}^{} \< X, \nu \> \;da \\
        %&=& - \frac{c^2}{n} \int_{\Gamma}^{} \Div(X) - \frac{c^2}{n} \int_{\Gamma_1}^{} \cancelto{0}{\<X, \nu \>} \\
        &=& - \frac{c^2n}{2} \int_{\Omega}\varphi\;dv\nonumber
    \end{eqnarray*}

On the other hand, note that
\begin{eqnarray*}
        \int_{\Omega}^{} \Div(\varphi u \nabla u) \;dv &=& \int_{\Omega}^{} \varphi u \Delta u \;dv + \int_{\Omega}^{} u \< \nabla u, \nabla \varphi \> \;dv + \int_{\Omega}^{} \varphi |\nabla u|^2 \;dv
    \end{eqnarray*}

Thus, from the overdetermined conditions \eqref{SPBC}  we have that 

\begin{equation}
        \label{eq004} 
        \int_{\Omega}^{} \varphi |\nabla u|^2 \;dv= - \int_{\Omega}^{} \Big[ \varphi u \Delta u + u \< \nabla u, \nabla \varphi \> \Big] \;dv.
    \end{equation}

%we conclude that 
 %   \begin{eqnarray*}
  %      \int_{\Omega}^{} \Div(\varphi u \nabla u) \;dV &=& \int_{\Gamma}^{} \varphi u \< \nabla u, \nu \> \;d\sigma+ \int_{\Gamma_1}^{} \varphi u \<\nabla u, \nu \> \;d\sigma\\
   %     &=& 0,
   % \end{eqnarray*}
   % by other hand,
    
From \eqref{eq004} and \eqref{diverpoh} we obtain
\begin{equation}\label{int0}
      \frac{c^2n}{2} \int_{\Omega}\varphi\;dv=\frac{n-2}{2}\int_{\Omega}^{} \Big[ \varphi u \Delta u + u \< \nabla u, \nabla \varphi \> \Big] \;dv+\int_\Omega\langle X,\nabla u\rangle\Delta u\;dv
\end{equation}

Now, let us calculate the right side of the above equation as follows. Firstly, since $u$ is a solution of the Serrin problem we get that 

$$\int_{\Omega}\varphi u\Delta u\;dv=-\int_{\Omega}\varphi u\;dv-nk\int_\Omega\varphi u^2\;dv.$$
Morever, from the overdeterminated conditions, the divergence theorem provides that

\begin{eqnarray}\label{int}
\int_\Omega\langle X,\nabla u\rangle\Delta u\;dv&=&-\int_\Omega \langle X,\nabla u\rangle\;dv-nk\int_{\Omega}u\langle X,\nabla u\rangle\;dv \nonumber \\
&=&-\int_{\Omega}(div(uX)-u\,div X)\;dv-nk \int_{\Omega}u\langle X,\nabla u\rangle\;dv \nonumber \\
&=&n\int_{\Omega}u\varphi\;dv-\frac{nk}{2}\int_\Omega(div(u^2X)-n\varphi u^2)\;dv \nonumber \\
&=&n\int_{\Omega}u\varphi\;dv+\frac{n^2k}{2}\int_\Omega \varphi u^2\;dv
\end{eqnarray}

    Plugging \eqref{int} and \eqref{int0} into \eqref{eq004} we conclude that
    \begin{eqnarray}
       c^2 \int_{\Omega}^{} \varphi \;dv = \Big( \frac{n+2}{n} \Big) \int_{\Omega}^{} \varphi u \;dv + 2k \int_{\Omega}^{} \varphi u^2 \;dv + \Big( \frac{n-2}{n} \Big) \int_{\Omega}^{} u \< \nabla \varphi, \nabla u \> \;dv
        \label{eq009}
    \end{eqnarray}

Now, from our overdetermined conditions we get that

\begin{equation}\label{b}
    \int_{\Omega}u\langle\nabla \varphi,\nabla u\rangle\;dv=-\frac{1}{2}\int_{\Omega}u^2\Delta\varphi\;dv.
\end{equation}
% From \eqref{trick}, \eqref{eqqqq1}, \eqref{a}, and \eqref{b} we get that

%\begin{equation}\label{prepohozaev}
 %   \frac{n+2}{n}\int_{\Omega}\varphi u=c^2\int_{\Omega}\varphi+\frac{n-2}{2n}\int_{\Omega}u^2\Delta\varphi-2k\int_{\Omega}\varphi u^2
%\end{equation}

Finally, since $Ric(X)=-(n-1)\nabla\varphi,$ we conclude that
\begin{eqnarray}\label{trick2}
 -(n-1)\Delta\varphi&=& div(Ric(X)) \nonumber \\
    &=& \frac{1}{2}\langle Ric, \mathcal{L}_Xg\rangle+div(Ric)(X) \nonumber \\
    &=&\varphi R+\frac{1}{2}X(R),
    \end{eqnarray}
where $R$ denotes the scalar curvature of $\Omega.$
From \eqref{eq009}, \eqref{b} and \eqref{trick2} we get the desired result.
\end{proof}

\begin{observacao}
While the core of the last result is the property that $M$ possesses a closed conformal vector field, we recall that manifolds admitting a nontrivial closed conformal vector field are locally isometric to a warped product with a 1-dimensional factor. For details, we refer the reader to \cite[Section~3]{montiel}.
\end{observacao}

% \begin{observacao}
%     In this context we have $\< X ,\nu\> = 0$ on $\Gamma_1 \setminus\{O\}.$ \textcolor{red}{Explicar melhor ou incluir como hipótese. Definir o campo $X$ em relação a $M$ e $N$.}
% \end{observacao}

% In this context, for the vector field $X = \rho \partial_t$ we have $\< X ,\nu\> = 0$ on $\Gamma_1 \setminus\{O\}.$ Indeed, since $X = \rho \partial_t$ is a closed conformal vector field with conformal factor $\varphi = \rho'$ then $\< X ,\nu\> = 0$ as long as $X$ is tangent and $\nu$ is normal to $\Gamma_1 \setminus \{O\}.$ \\

% \begin{exemplo}
%     Let $(N,g_N)$ be a connected $(n-1)$-Riemannian manifold. Let us consider  $$M^n = (0, \infty) \times_{\rho(r)} N^{n-1}$$ a product manifold endowed with a warped metric given by $$g_M = dr \otimes dr + \rho(r)^2 g_N.$$ In this context, by \textcolor{red}{incluir referência}, we have the vector field $X = \rho \partial_t$ is a closed conformal vector field with conformal factor $\varphi = \rho'.$ Let $\Sigma \subset M$ a convex cone, therefore  \textcolor{red}{explicar (pelo menos no caso do $\Rset^n$)} $\< X ,\nu\> = 0$ on $\Gamma_1 \setminus\{O\}.$
% \end{exemplo}

We also recall the following Obata-type result that characterizes the rigidity of the Serrin-type problem. We refer to \cite[Lemma 6]{FR_2022} and Step 4 in \cite[Proof of Theorem 2]{CiraoloRoncoroni2020} for this result.

\begin{lema} \label{rigidez_Tipo_Obata}
    Let $(M^n, g)$ be an n-dimensional Riemannian manifold (not necessarily complete). Take $\Sigma$ a open  cone in $M$ such that $\Sigma \setminus \{O\}$ is smooth, $\Omega \subset \Sigma$ is a sector-like domain, so that $\overline{\Omega}$ is compact, and there exists a classical solution $u \in C^{1}(\Omega \cup \Gamma \cup \Gamma_1\setminus\{O\} ) \cap W^{1,\infty}(\Omega) \cap W^{2,2}(\Omega)$ to the problem 
    \begin{eqnarray*} 
         \left\lbrace \begin{array}{ccccc}
	\nabla^2 u & = & \Big( -\frac{1}{n} - ku \Big) g & \textrm{       in } & \Omega \\
	u & > & 0 &  &  \\
	u & = & 0 & \textrm{ on } & \Gamma \\
    	    u_{\nu} & =  & 0 & \textrm{ on } & \Gamma_1 \setminus \{O\},
        \end{array} \right. 
    \end{eqnarray*}
    where $k \in \Rset.$ Then $\Omega = \Sigma \cap B_{r}(x_0)$ where $B_r(x_0)$ is a metric ball of radius $r$ centered at $x_0$ and $u$ is a radial function, i.e. u depends only on the distance from the center of the ball.
\end{lema}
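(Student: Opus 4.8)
The plan is to run an Obata-type (Reilly-type) integration argument adapted to the sector-like geometry, exploiting that the Hessian condition $\nabla^2 u = (-\tfrac1n - ku)g$ says exactly that $\nabla u$ is a closed conformal-type field on $\Omega$, so that the integral curves of $\nabla u$ foliate $\Omega$ by geodesic spheres. Concretely, first I would observe that tracing the Hessian equation gives $\Delta u + nku = -1$, so $u$ solves the Serrin problem, and that along $\Gamma$ the condition $u=0$ together with $\nabla^2 u \parallel g$ forces $|\nabla u|$ to be locally constant on $\Gamma$; the outward normal derivative $u_\nu$ is therefore a (nonzero) constant on each component of $\Gamma$, and $u>0$ in the interior forces $u_\nu < 0$ there. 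Next I would analyze the function $u$: from $\nabla^2 u = (-\tfrac1n - ku)g$ one computes $\nabla(|\nabla u|^2 + \tfrac{1}{n}\,\text{(linear/quadratic in }u)) $ is proportional to $\nabla u$, yielding a first integral $|\nabla u|^2 = P(u)$ for an explicit polynomial $P$ depending on $k$ (for $k=0$ this is $|\nabla u|^2 = a - \tfrac2n u$; for $k\neq 0$ a quadratic in $u$). This makes $u$ a function of a single "radial" parameter.

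The key step is then to show the level sets of $u$ are geodesic spheres and that $\Gamma$ is one of them. Away from critical points of $u$, the unit field $\nu = \nabla u/|\nabla u|$ satisfies $\nabla_\nu \nu = 0$ (its integral curves are unit-speed geodesics, since $\nabla u$ is a gradient with conformal Hessian), so flowing along $-\nu$ from $\Gamma$ sweeps out $\Omega$; the conformality of the Hessian forces each level set $\{u = s\}$ to be totally umbilic with constant mean curvature, and the first integral $|\nabla u|^2 = P(u)$ controls the "distance travelled" as a function of $u$. One shows $u$ has exactly one critical point $x_0$ in $\overline\Omega$ (the maximum), around which small geodesic spheres are level sets, and hence by the ODE flow every level set is the geodesic sphere $\partial B_\rho(x_0)$ for the appropriate $\rho = \rho(s)$; taking $s=0$ gives $\Gamma = \partial B_r(x_0) \cap \Sigma$ for $r = \rho(0)$, so $\Omega = \Sigma \cap B_r(x_0)$. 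The Neumann condition $u_\nu = 0$ on $\Gamma_1\setminus\{O\}$ is used to guarantee the flow stays inside $\Sigma$ (the field $\nabla u$ is tangent to $\partial\Sigma$ there) so that the geodesic-sphere picture is consistent with the cone boundary, and that the critical point $x_0$ lies on the axis / inside $\Sigma$.

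The main obstacle I anticipate is the regularity and the behavior near the singular set $\{O\} \cup \partial\Gamma$: the flow of $\nabla u/|\nabla u|$ is only a priori defined on the smooth part, and one must argue — using the $W^{2,2}\cap W^{1,\infty}$ hypothesis and the approximation arguments cited from \cite{Pacella2020} and \cite{CiraoloRoncoroni2020} — that the level-set foliation extends across these lower-dimensional sets and that no integral curve runs into $O$ before reaching the critical point. A secondary technical point is ensuring the unique critical point $x_0$ of $u$ does not sit on $\partial\Sigma$ in a degenerate way; convexity of $\Sigma$ is not assumed in this lemma, so this must be handled purely from the overdetermined conditions, presumably by a reflection/doubling argument across $\Gamma_1$ (as in the cited Step 4 of \cite{CiraoloRoncoroni2020}) reducing to the classical Obata theorem on a manifold without the cone boundary.
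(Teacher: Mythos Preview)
The paper does not actually supply its own proof of this lemma: immediately after the statement it writes ``We refer to \cite[Lemma 6]{FR_2022} and Step 4 in \cite[Proof of Theorem 2]{CiraoloRoncoroni2020} for this result.'' So there is no argument in the paper to compare against; your sketch is, in fact, essentially the argument carried out in those references. In particular, the first integral you describe is exactly the $P$-function $|\nabla u|^2 + \tfrac{2}{n}u + ku^2$: once $\nabla^2 u = (-\tfrac{1}{n}-ku)g$, its gradient vanishes identically, so $P$ is a genuine constant on $\Omega$ (not merely constant along integral curves). From there the geodesic flow of $\nabla u/|\nabla u|$, the total umbilicity of the level sets, and the tangency of $\nabla u$ to $\partial\Sigma$ along $\Gamma_1$ give the foliation by pieces of geodesic spheres and the identification $\Gamma = \Sigma\cap \partial B_r(x_0)$, just as you outline.

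One minor correction to your last paragraph: you do not need a reflection/doubling trick, nor convexity of $\Sigma$, to handle the location of $x_0$. The constancy of $P$ already expresses $|\nabla u|$ as an explicit function of $u$, so $u$ is a function of signed distance to $\Gamma$; the point $x_0$ is then the focal point of this distance, and it is perfectly allowed to sit on $\overline{\Gamma_1}$ (even at $O$) --- the conclusion $\Omega = \Sigma\cap B_r(x_0)$ covers that case. The regularity issues near $O$ and $\partial\Gamma$ are indeed handled by the approximation lemmas from \cite{Pacella2020,CiraoloRoncoroni2020} that the paper has already invoked.
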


Now, we prove the main result of this section, Theorem \ref{TeoA}. While the initial steps of this proof are similar to those presented in previous works, we highlight the interesting hypothesis introduced here. This hypothesis is motivated by a application of the $P$-function method coupled with Lemma \ref{poho_type}. For the sake of completeness, we will present the full proof, including all the steps.

 \begin{demonstracao}[Proof of Theorem \ref{TeoA}]
 Let $u$ be a solution to the problem \eqref{SP_cone1} and consider the $P$-function defined by 
     \begin{equation*}
         P = |\nabla u|^2 + \frac{2}{n}u + ku^2.
     \end{equation*}
    A classical computation that comes from Bochner's formula shows that since $Ric\geq (n-1)k g$, then 
    \begin{equation}\label{sub_harmonic}
     \Delta P(u)\geq 2\left|\nabla^2 u-\frac{\Delta u}{n}g\right|^2,  
    \end{equation} 
    this is $P$ is a sub-harmonic function (see, for example \cite[Lemma 5]{FR_2022}). Furthermore, since $u=0$ on $\Gamma$ and from \eqref{SPBC}, we have that $P=c^2$ on $\Gamma.$
    % \begin{eqnarray}
    %     \Delta |\nabla u|^2 &=& 2|\nabla^2 u|^2 + 2 \< \nabla u, \nabla \Delta u \> + 2 Ric(\nabla u, \nabla u) \nonumber \\
    %     &\geq& \frac{2}{n}(\Delta u)^2 + 2 \< \nabla u, \nabla \Delta u \> + 2 Ric(\nabla u, \nabla u) \nonumber \\
    %     &\geq& \frac{2}{n} \Delta u (-1 - knu) + 2\< \nabla u, \nabla (-1 - knu) \> + 2k(n-1) |\nabla u|^2 \nonumber \\
    %     &=& -\frac{2}{n} \Delta u - 2ku \Delta u - 2k |\nabla u|^2 \nonumber \\
    %     &=& -\frac{2}{n} \Delta u - k \Delta(u^2). \label{igualdade}
    % \end{eqnarray}
    Moreover,
    \begin{equation}
        \label{eq001} 
        \nabla P = 2 \nabla^2 u (\nabla u,\cdot) + \frac{2}{n} \nabla u + 2k u\nabla u
    \end{equation}
    and from the convexity assumption of the cone $\Sigma,$ we have, on $\Gamma_{1}$,
    \begin{eqnarray}
         0 &=& g(\nabla \< \nabla u, \nu \>, \nabla u)  \nonumber \\
         &=& \nabla^2u (\nabla u, \nu) + II(\nabla u, \nabla u) \nonumber \\
         &\geq& \nabla^2u (\nabla u, \nu) \textrm{ on } \Gamma_1. \label{eq002}
    \end{eqnarray}
    From \eqref{eq001} and \eqref{eq002}, and recalling that $u_{\nu}=0$ on $\Gamma_{1}$, we obtain
    \begin{eqnarray*}
       P_{\nu} &\doteq& \< \nabla P ,\nu \> \\
       &=& 2 \nabla^2u (\nabla u, \nu) \leq 0 \textrm{ on } \Gamma_1\setminus \{O\}.
    \end{eqnarray*}
    Hence, the P-function satisfies
    \begin{eqnarray*} 
    \left\lbrace \begin{array}{ccccc}
	\Delta P & \geq & 0 & \textrm{ in } & \Omega \\
	P & = & c^2 & \textrm{ on } & \Gamma \\
	    P_{\nu} & \leq & 0 & \textrm{ on } & \Gamma_1 \setminus \{O\}. \\
    \end{array} \right. 
    \end{eqnarray*}
    % Moreover, from \cite[Lemma 2.1]{CiraoloVezzoni2019}, we have that
    % \begin{equation}
    %     \label{eq003} 
    %     \Delta P = 0 \textrm{ if and only if } \nabla^2 u = \Big( -\frac{1}{n} - ku \Big) g.
    % \end{equation}

We have $P \leq c^2$ in $\Omega.$ Indeed, multiplying $\Delta P \geq 0$ by $(P-c^2)^{+}$ and by integration by parts we obtain 
    \begin{eqnarray*}
        0 \geq \int_{\Omega \cap \{ P > c^2\}}^{} |\nabla P|^2 \;dv- \int_{\partial \Omega}^{} (P - c^2)^{+} P_{\nu} \;da. 
    \end{eqnarray*}
    Since $P=c^2$ on $\Gamma$ and $P_{\nu} \leq 0$ on $\Gamma_1$ we obtain that
\begin{eqnarray*}
0 \geq \int_{\Omega \cap \{ P > c^2 \} }^{} |\nabla P|^2 \;dv \geq 0
\end{eqnarray*}
and them $\nabla P = 0$ on $\Omega \cap \{ P > c^2 \}.$ Follows $P$ is a constant function on $\Omega \cap \{ P > c^2 \},$ since $P =c^2$ on $\Gamma,$ by continuity, $P \equiv c^2$ on $\overline{\Omega}.$ In this case, $\{ P > c^2 \}$ is a empty set and we have $P \leq c^2$ in $\Omega.$ 

Now, let us prove that $P=c^2$ in $\overline{\Omega}.$ Indeed, suppose by contradiction that $P < c^2$ in $\Omega.$ 
   
    Since $\varphi > 0$ and $P < c^2,$ we have
    \begin{eqnarray*}
        \varphi c^2 > \varphi \Big( |\nabla u|^2 + \frac{2}{n}u + ku^2 \Big),
    \end{eqnarray*}
    then by \eqref{SP} and \eqref{eq004}
    \begin{eqnarray}
        c^2 \int_{\Omega}^{} \varphi \;dv &>& \int_{\Omega}^{} \varphi |\nabla u|^2 \;dv + \frac{2}{n} \int_{\Omega}^{} \varphi u \;dv + k \int_{\Omega}^{} \varphi u^2 \;dv \nonumber \\
        &=& -\int_{\Omega}^{} \varphi u \Delta u \;dv - \int_{\Omega}^{} u \< \nabla \varphi, \nabla u \> \;dv + \frac{2}{n} \int_{\Omega}^{} \varphi u \;dv + k \int_{\Omega}^{} \varphi u^2 \;dv \nonumber\\
        &=& \int_{\Omega}^{} \varphi u \Big( 1 + nku \Big) \;dv  - \int_{\Omega}^{} u \< \nabla \varphi, \nabla u \> \;dv + \frac{2}{n} \int_{\Omega}^{} \varphi u \;dv + k \int_{\Omega}^{} \varphi u^2 \;dv  \nonumber\\
        &=& \Big(1 + \frac{2}{n} \Big) \int_{\Omega}^{} \varphi u \;dv + k(n+1) \int_{\Omega}^{} \varphi u^2 \;dv - \int_{\Omega}^{} u \< \nabla \varphi, \nabla u \> \;dv. \label{eq005} 
    \end{eqnarray}
    % We also have to
    % \begin{eqnarray*}
    %     \Div(u^2X) &=& u^2 \Div(X) + \< \nabla(u^2),X \> \\
    %     &=& u^2 n \varphi + 2 u \< \nabla u, X \> \;\;\;\; (\textrm{since } X \textrm{ is conformal})
    % \end{eqnarray*}
    % so
    % \begin{eqnarray*}
    %     u^2 \varphi = \frac{1}{n} \Div(u^2 X) - \frac{2}{n} u \< \nabla u, X \> 
    % \end{eqnarray*}
    % and then
    % \begin{eqnarray}
    %     \int_{\Omega}^{} u^2 \varphi \;dv &=& \frac{1}{n} \int_{\Omega}^{} \Div(u^2X) \;dv - \frac{2}{n} \int_{\Omega}^{} u \< \nabla u, X \> \;dv \nonumber \\
    %     &=& \frac{1}{n} \int_{\Gamma}^{} u^2 \< X, \nu \> \;dv +  \frac{1}{n} \int_{\Gamma_1}^{} u^2\< X,\nu \> \;dv - \frac{2}{n} \int_{\Omega}^{} u \< \nabla u, X \> \;dv \nonumber \\
    %     &=& - \frac{2}{n} \int_{\Omega}^{} u \< \nabla u, X \> \;dv \label{eq006}
    % \end{eqnarray}
    % Putting together the last two relations, specifically \eqref{eq006} in \eqref{eq005}, we have
    % \begin{eqnarray}
    %     c^2 \int_{\Omega}^{} \varphi \;dv&>& \Big( 1 + \frac{2}{n} \Big) \int_{\Omega}^{} \varphi u \;dv - 2k \Big( \frac{n+1}{n} \Big) \int_{\Omega}^{} u \< \nabla u, X \> \;dv - \int_{\Omega}^{} u \< \nabla \varphi, \nabla u \> \;dv. \label{eq007}
    %     %&=& \Big( 1 + \frac{2}{n} \Big) \int_{\Omega}^{} \varphi u \;dv - 2k \Big( \frac{n+1}{n} \Big) \int_{\Omega}^{} u \< \nabla u, X \> \;dv - \int_{\Omega}^{} u Ric(X, \nabla u) \;dv \nonumber 
    % \end{eqnarray}
    % %as long as $Ric(X, \cdot) = - (n-1) \nabla \varphi.$

    Putting together \eqref{eq005} and Lemma \ref{poho_type} we have 
    \begin{eqnarray*}
        0 > k(n-1) \int_{\Omega}^{} \varphi u^2 \;dv- \frac{1}{n} \int_{\Omega}^{} u^2\Big[\varphi R + \frac{1}{2} X(R)  \Big]\;dv,
    \end{eqnarray*}
    which is a contradiction in view of \eqref{hypothesis}.
    
    % since, by \cite{FAM_2023}, we have $\int_{\Omega}^{} u \< \nabla \varphi, \nabla u \> \;dv = -\frac{1}{2} \int_{\Omega}^{} u^2 \Delta \varphi \;dv$ and $\Delta \varphi = - \frac{1}{n-1} \Big( \varphi R + \frac{1}{2} X(R) \Big)$ 
    % and we have a contradiction with \eqref{hipotese} \\
    Since $P$ is constant, then $\Delta P = 0$ 
    %and the rigit conclusion follows from \cite[Lemma 2.1 and Lemma 2.4]{CiraoloVezzoni2019} in the same way as in step 4 of \cite{CiraoloRoncoroni2020}. 
    and from \eqref{sub_harmonic} we have that $u$ satisfies
    the following Obata-type problem 
\begin{eqnarray*}
\left\lbrace \begin{array}{ccccc}
\nabla^2 u & = & \Big( -\frac{1}{n} - ku \Big) g. & \textrm{ in } & \Omega \\
u & > & 0 &  &  \\
u & = & 0 & \textrm{ on } & \Gamma \\
u_{\nu} & =  & 0 & \textrm{ on } & \Gamma_1 \setminus \{O\}
\end{array} \right. 
\end{eqnarray*}
and the result follows from Lemma \ref{rigidez_Tipo_Obata}. 
\end{demonstracao}
\begin{observacao}\label{rem_sign}
As observed in the proof, it is essential that the conformal factor \(\rho'\) be positive in order to obtain \eqref{eq005}. Additionally, a consequence of the same analysis is that if \(\rho' < 0\), an analogous result holds by requiring \eqref{hypothesis} with the reverse inequality.     
\end{observacao}

When the warped product $M=I\times_{\rho} N$ is Einstein we have the interesting consequence, the Corollary \ref{coro_Einstein}:
\vspace{0.2cm}

\begin{demonstracao}[Proof of Corollary \ref{coro_Einstein}]
Since \(\operatorname{Ric} = (n-1)k g\), the (constant) scalar curvature is \(R = n(n-1)k\). Since \(X(R) = 0\) in this case, we conclude that the equality in \eqref{hypothesis} is satisfied. Thus, independently of the sign of \(\rho'\) (see Remark \ref{rem_sign}), we conclude the result.
\end{demonstracao}

Such characterizations have a stronger consequence in a particular case of model manifolds: beyond characterizing geodesic balls, we can also characterize the ambient space \(M\). As in \cite{Roncoroni2018}, we recall that a Riemannian manifold \((M^{n}_{\rho}, g_{M^{n}_{\rho}})\) is called a model manifold if:
\begin{equation*}
M^{n}_{\rho}\doteq\dfrac{[0,R) \times \mathbb{S}^{n-1}}{\sim} \quad {and} \quad g_{M^{n}_{\rho}}:=dt\otimes dt + \rho^{2}(t) g_{\mathbb{S}^{n-1}};
\end{equation*}
where $R\in(0,+\infty]$, $\sim$ is the relation that identifies all the points of $\lbrace 0\rbrace\times\mathbb{S}^{n-1}$ and $\rho:[0,R)\rightarrow[0,+\infty)$ is a smooth function such that:
\begin{itemize}
\item $\rho(t) >0$, for all $t>0$;
\item $\rho^{(2k)}(0)=0$, for all $k= 0,1,2,\dots$;
\item  $\rho'(0)=1$.
\end{itemize}
Moreover, the unique point corresponding to \(t=0\) is called the pole of the model and is denoted by \(o \in M\), and \(\rho\) is called the warping function. We highlight that, as mentioned in the introduction, the space forms are examples of such structures. The following result is a version of \cite[Theorem 4]{FR_2022} in this setting.

\begin{corolario}
	Let $(M,g)$ be a model manifold such that $\operatorname{Ric} \geq (n-1)k g$ and $\rho'>0$. Let $\Sigma$ be a convex open cone in $M$ such that $\Sigma\setminus\left\{O\right\}$ is smooth, and let $\Omega\subset \Sigma$ be a sector-like domain domain. Assume that there exists a classical solution 
\[
u \in C^{1}(\Omega \cup \Gamma \cup \Gamma_1 \setminus \{O\}) \cap W^{1,\infty}(\Omega) \cap W^{2,2}(\Omega)
\]
to the problem \eqref{SP_cone1}. If $u$ satisfies the compatibility condition \eqref{hypothesis}, then $\Omega = \Sigma \cap B_{r}(O)$ where $B_{r}(O)$ is a metric ball centred at $O$ of radius $r$ and $u$ is a radial function given by
	\begin{equation*}
	u(t)=\begin{cases}
	\frac{\cosh(\sqrt{-k}t)}{kn\cosh(\sqrt{-k}r)}-\frac{1}{nk} &\mbox{if } k<0\, , \\ \frac{r^2}{2n}-\frac{t^2}{2n} &\mbox{if } k=0, \\
	\frac{\cos(\sqrt{k}t)}{kn\cos(\sqrt{k}r)}-\frac{1}{nk} &\mbox{if } k>0 \, ,
	\end{cases}
	\end{equation*}
	where $t$ is the geodesic distance from $O$.
	
	Moreover, the warping function $\sigma$ in the metric ball is given by the following expression:
	\begin{equation*}
	\sigma(t)=\begin{cases}
	 \frac{\sinh(\sqrt{-k}t)}{\sqrt{-k}} &\mbox{if } k<0, \\ 
	 t &\mbox{if } k=0, \\ 
	 \frac{\sin(\sqrt{k}t)}{\sqrt{k}}&\mbox{if } k>0\, .
	\end{cases}
	\end{equation*}
In particular, $(M,g)$ is a space form.    
\end{corolario}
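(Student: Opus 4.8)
The plan is to combine Theorem~\ref{TeoA} with a direct integration of the rigidity equation it produces, in the spirit of \cite[Theorem~4]{FR_2022}. First observe that a model manifold $M^n_\rho$ is in particular a warped product $I\times_\rho N$ with $N=\mathbb{S}^{n-1}$ and, by hypothesis, $\rho'>0$; since $\operatorname{Ric}\ge(n-1)kg$ and the compatibility condition \eqref{hypothesis} is assumed, Theorem~\ref{TeoA} applies and gives $\Omega=\Sigma\cap B_r(x_0)$ for some geodesic ball. Moreover, tracing through the proof of Theorem~\ref{TeoA}, the $P$-function is constant on $\overline{\Omega}$, so the Bochner estimate \eqref{sub_harmonic} is an equality and $u$ solves the Obata-type system of Lemma~\ref{rigidez_Tipo_Obata}; hence $\nabla^2 u=(-\tfrac1n-ku)g$ on $\Omega$, $u$ is a radial function of $s:=\operatorname{dist}(\cdot,x_0)$, and the equality in \eqref{sub_harmonic} also yields $\operatorname{Ric}(\nabla u,\nabla u)=(n-1)k|\nabla u|^2$ on $\Omega$.

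Next I would identify the center $x_0$ with the pole $O$. The metric of $M$ is by construction a warped product centered at $O$, while the identity $\nabla^2 u=(-\tfrac1n-ku)g$ together with $u=u(s)$ exhibits the metric restricted to $\Omega$ as a warped product centered at $x_0$; comparing the two and using that $\partial\Sigma\setminus\{O\}$ is ruled by the radial geodesics of the model issuing from $O$, while the closed conformal field $\nabla u=u'(s)\partial_s$ is tangent to $\partial\Sigma$ along $\Gamma_1$ (because $u_\nu=0$ there), one obtains $x_0=O$ exactly as in \cite[Theorem~4]{FR_2022}. Consequently $s=t$ is the geodesic distance from $O$ and $u=u(t)$ on $\Omega$.

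Finally I would integrate. Restricting $\nabla^2 u=(-\tfrac1n-ku)g$ to a unit-speed radial geodesic through $O$ gives $u''+ku=-\tfrac1n$, while reading the ``spherical'' part of the same equation in the coordinates $g=dt^2+\rho^2(t)g_{\mathbb{S}^{n-1}}$ gives $u'\rho'/\rho=u''$, that is, $\rho$ is a constant multiple of $u'$; since $\rho(0)=0$ this forces $u'(0)=0$, and the normalization $\rho'(0)=1$ then identifies $\rho$ on $[0,r)$ with the stated warping function $\sigma$ (one of $\sinh(\sqrt{-k}\,t)/\sqrt{-k}$, $t$, $\sin(\sqrt{k}\,t)/\sqrt{k}$). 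Solving $u''+ku=-\tfrac1n$ with $u'(0)=0$ and the Dirichlet condition $u(r)=0$ coming from $u=0$ on $\Gamma=\partial B_r(O)\cap\Sigma$ produces the displayed formula for $u(t)$ in the three regimes $k<0$, $k=0$, $k>0$. In particular $\rho=\sigma$ on $[0,r)$ means the metric on $B_r(O)$ has constant sectional curvature $k$ (consistently with the Ricci equality of the first step), so $(M,g)$ is a space form.

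The main obstacle is the second step: pinning the center of the ball at the vertex/pole of the cone from the single Neumann condition $u_\nu=0$ on $\Gamma_1$. This is where the cone structure, rather than just the warped-product structure, is essential, and some care is needed for very symmetric cones — e.g. when $\partial\Sigma$ is totally geodesic — in which ``the pole $O$'' is only determined up to the freedom of re-choosing the model structure once $M$ is known to have constant curvature. The remaining ingredients (the appeal to Theorem~\ref{TeoA} and the elementary linear ODE in the last step) are routine.
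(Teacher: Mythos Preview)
Your proposal is correct and follows the route the paper implicitly takes: the paper gives no proof of this corollary, presenting it simply as the cone-analogue of \cite[Theorem~4]{FR_2022}, so applying Theorem~\ref{TeoA} to obtain the Obata equation and then integrating it as in that reference is precisely the intended argument. Your identification of the center-at-pole step $x_0=O$ as the only genuinely delicate point, together with the totally-geodesic-cone caveat, is accurate and matches what one finds in the cited literature.
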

% \begin{observacao}
% Falar que os autores acham que o resultado anterior pode valer de forma mais geral para variedades que possuem um campo conforme fechado (citar hipótese de F.S.R.), mas tanto a definição de cone pode ser obscura, quanto a natureza do resultado acima tem implicações interessantes mesmo no caso warped.    
% \end{observacao}
Another consequence is the following result, which holds in the case \(k=0\). In this setting, we assume the hypothesis of constant scalar curvature.
\begin{corolario} 
Let $(M,g)$ be a model manifold that has $\operatorname{Ric} \geq 0$, $\rho'>0$ and constant scalar curvature $R$. Let $\Sigma$ be a convex open cone in $M$ such that $\Sigma\setminus\left\{O\right\}$ is smooth, and let $\Omega\subset \Sigma$ be a sector-like domain domain. Assume that there exists a classical solution 
\[
u \in C^{1}(\Omega \cup \Gamma \cup \Gamma_1 \setminus \{O\}) \cap W^{1,\infty}(\Omega) \cap W^{2,2}(\Omega)
\]
to the problem \eqref{SP_cone1} with $k=0$. Then $\Omega = \Sigma \cap B_r(O)$ where $B_r(O)$ is the Euclidean ball of radius $r$ centred in the pole $O$ and $u$ is a radial function. 
\end{corolario}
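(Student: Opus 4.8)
The plan is to show that the hypotheses already force $(M,g)$ to be flat, and then to invoke the preceding corollary with $k=0$, where the compatibility condition \eqref{hypothesis} is automatic. First I would note that $\operatorname{Ric}\ge 0$ gives, by tracing, $R\ge 0$, and that $R$ constant means $X(R)=0$.

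The key step is to exploit how rigid the constant–scalar–curvature condition is on a \emph{pole} model. For $g=dt^{2}+\rho^{2}g_{\mathbb{S}^{n-1}}$ one has
\[
R=(n-1)\Big[-2\frac{\rho''}{\rho}+(n-2)\frac{1-(\rho')^{2}}{\rho^{2}}\Big]\qquad (t>0).
\]
Multiplying this identity by $\rho^{n-1}\rho'$, the left–hand side becomes $\frac{R}{n-1}\rho^{n-1}\rho'=\frac{R}{n(n-1)}\frac{d}{dt}(\rho^{n})$, while the right–hand side is exactly $\frac{d}{dt}\big[\rho^{n-2}(1-(\rho')^{2})\big]$. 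Using that $R$ is constant, integrating from $0$, and invoking the pole normalization $\rho(0)=0$ (so that both expressions vanish at $t=0$) yields the first integral
\[
(\rho')^{2}=1-\frac{R}{n(n-1)}\,\rho^{2}\qquad\text{on }I=[0,\infty).
\]
Since $\rho$ is strictly increasing (because $\rho'>0$) and $\rho(0)=0$, if $R>0$ the right–hand side would vanish at the finite value $t_{*}$ with $\rho(t_{*})=\sqrt{n(n-1)/R}$, forcing $\rho'(t_{*})=0$ and contradicting $\rho'>0$ on all of $[0,\infty)$. Hence $R\le 0$, and together with $R\ge 0$ we conclude $R=0$. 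In particular $(\rho')^{2}\equiv 1$, so $\rho(t)=t$ and $(M,g)$ is flat (equivalently, $\operatorname{Ric}\equiv 0$).

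With $R=0$ and $k=0$, condition \eqref{hypothesis} reads $\int_{\Omega}u^{2}\big[\rho'(0-0)-\tfrac12 X(0)\big]\,dv=0\ge 0$ and is satisfied. The preceding corollary, applied with $k=0$, then gives $\Omega=\Sigma\cap B_{r}(O)$ and $u=\tfrac{r^{2}-t^{2}}{2n}$ with warping $\sigma(t)=t$; since $\rho(t)=t$ the ambient space is Euclidean, so $B_{r}(O)$ is a genuine Euclidean ball and $t=|x|$. (Alternatively one applies Theorem \ref{TeoA} directly to get $\Omega=\Sigma\cap B_{r}(x_{0})$ with $u$ radial about $x_{0}$, and then identifies $x_{0}=O$ as in \cite{CiraoloRoncoroni2020,Pacella2020}: on $\Gamma_{1}$ the condition $u_{\nu}=0$ forces the position vector from $x_{0}$ to be tangent to $\partial\Sigma$, which, $\partial\Sigma$ being ruled by rays issuing from the vertex $O$, is possible only if $x_{0}=O$.)

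\textbf{Main obstacle.} The step I expect to carry the weight is isolating the first integral $(\rho')^{2}=1-\frac{R}{n(n-1)}\rho^{2}$: the computation is short once one multiplies the scalar–curvature equation by $\rho^{n-1}\rho'$, but the conceptual point — that on a pole model constant scalar curvature alone (not the Einstein hypothesis of Corollary \ref{coro_Einstein}) already pins down $\rho$, and that combining this with $\operatorname{Ric}\ge 0$ and the noncompactness $I=[0,\infty)$ kills the curvature — is the crux. The only remaining technical care, namely the behaviour at $t=0$ and along $\partial\Gamma$, is handled by the same approximation arguments already invoked in this section.
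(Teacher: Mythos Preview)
Your argument is correct and takes a genuinely different route from the paper's.

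The paper proceeds by first asserting that \eqref{hypothesis} holds, then re-running the proof of Theorem~\ref{TeoA} so that the $P$-function is constant; the equality case in \eqref{sub_harmonic} then yields $\operatorname{Ric}(\nabla u,\nabla u)=0$, and since for a model one has $\operatorname{Ric}(\partial_t,\partial_t)=-(n-1)\rho''/\rho$ (and $\nabla u$ is radial once $u$ is known to be radial about $O$), this forces $\rho''=0$, hence $\rho(t)=t$.

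You instead establish flatness \emph{a priori}, independently of $u$: from the scalar-curvature identity on a pole model you extract the first integral $(\rho')^{2}=1-\tfrac{R}{n(n-1)}\rho^{2}$, and then $\rho'>0$ on $I=[0,\infty)$ rules out $R>0$ (the ODE $\rho'=\sqrt{1-c\rho^{2}}$ with $\rho(0)=0$ integrates to $\rho(t)=c^{-1/2}\sin(\sqrt{c}\,t)$, so $\rho'$ vanishes at the finite time $\pi/(2\sqrt{c})$). Together with $R\ge 0$ this gives $R=0$ and $\rho(t)=t$; then \eqref{hypothesis} is trivially $0\ge 0$ and the preceding corollary finishes the job.

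Your route cleanly decouples the rigidity of the ambient model from the overdetermined problem, and it also clarifies a step that is at best tersely phrased in the paper: with $k=0$ and $R$ constant, \eqref{hypothesis} reads $-R\int_{\Omega}u^{2}\rho'\,dv\ge 0$, which requires $R\le 0$, not $R\ge 0$; combining the two still yields $R=0$, so the paper's conclusion stands, but your ODE argument makes the mechanism transparent. Conversely, the paper's route stays entirely within the $P$-function/Bochner framework of the section and only needs $\rho''=0$ on the $t$-range swept out by $\Omega$, whereas yours determines $\rho$ globally but leans on the implicit hypothesis $I=[0,\infty)$ --- worth stating explicitly, since for the hemisphere ($I=[0,\pi/2)$, $\rho=\sin t$) all the other hypotheses hold with $R>0$.
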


\begin{demonstracao}
Since \(R\) is constant, \(\rho' > 0\), and \(R \geq 0\), the compatibility condition \eqref{hypothesis} is satisfied. Furthermore, since the \(P\)-function is constant, the inequality obtained in \eqref{sub_harmonic} implies that the Ricci curvature satisfies \( \operatorname{Ric}(\nabla u, \nabla u) = 0 \). Given the explicit expression fro the Ricci curvature in this case, \( \operatorname{Ric} = -(n-1) \frac{\rho''}{\rho} \), it follows that \(\rho'' = 0\) and then \(\rho(t) = t\). Hence, $\Omega = \Sigma \cap B_r(O)$, where \(B_r(O)\) is a ball centred in $O$ and $M$ is the Euclidean space.
\end{demonstracao}

%%%%%%%%%%%%%%%%%%%%%%
%%%%%%%%% Reilly and Soup Bubble
%%%%%%%%%%%%%%%%%%%%%%
\section{Soap Bubble and Heintze-Karcher inequality in cones} 

In this section, we describe a Soap Bubble result and a Heintze-Karcher inequality in the setting of sector-like domains in Riemannian manifolds as described earlier. A crucial tool for this purpose is the well-known Reilly's identity, proved in \cite{Reilly1977}: 
\begin{eqnarray}
    \int_{\Omega} \Big( \frac{n-1}{n}(\Delta f)^2 - |\mathring{\nabla}^2 f|^2 \Big) \;dv= \int_{\partial \Omega} \Big( h(\overline{\nabla} z, \overline{\nabla} z) + 2 f_{\nu} \overline{\Delta} z + H f_{\nu}^2 \Big) \;da+ \int_{\Omega} Ric(\nabla f, \nabla f)\;dv, \label{reilly}
\end{eqnarray}
which holds true for every domain $\Omega$ in a Riemannian manifold $(M^n, g)$ and for every $f \in C^{\infty}(\Omega)$. Here, $\mathring{\nabla}^2 f$ denotes the traceless Hessian of $f$, explicitly given by 
\[
\mathring{\nabla}^2 f = \nabla^2 f - \frac{\Delta f}{n} g,
\] 
and $\overline{\nabla}$ and $\overline{\Delta}$ indicate the gradient and the Laplacian of the induced metric on $\partial \Omega$, respectively. Additionally, $z = f|_{\partial \Omega}$, $\nu$ is the unit outward normal to $\partial \Omega$, $h(X, Y) = g(\nabla_X \nu, Y)$ is the second fundamental form, and $H = \textrm{tr}_g h$ is the mean curvature (with respect to $-\nu$) of $\partial \Omega$.
 \\

During this section we study sector-like domains $\Omega$ as described in Definition \ref{def_sec} such that exists a solution $u \in C^{1}(\Omega \cup \Gamma \cup \Gamma_1\setminus\{O\} ) \cap W^{1,\infty}(\Omega) \cap W^{2,2}(\Omega)$ to the following problem
\begin{eqnarray}\label{eq_soap} 
     \left\lbrace \begin{array}{ccccc}
	\Delta u + nku  & = & -1 & \textrm{ in } & \Omega \\
	%u & > & 0 &  &  \\
	u & > & 0 & \textrm{ in } & \Omega \\
    u & = & 0 & \textrm{ on } & \Gamma \\
        u_{\nu} & = & 0 & \textrm{ on } & \Gamma_1 \setminus \{O\}
    \end{array} \right.
\end{eqnarray}
Recall that, in this situation, $\partial\Omega=\Gamma\cup\Gamma_{1}$, where $\Gamma\subset\Sigma$ and $\Gamma_{1}\subset\partial\Sigma$, where $\Sigma$ is a cone in $M$ such that $\Sigma \setminus \{O\}$ is smooth.

\begin{lema} \label{LemmaReilly}
    Let $u$ be a solution of \eqref{eq_soap}. Then
    \begin{eqnarray*}
        \int_{\Omega}^{} |\mathring{\nabla}^2 u|^2 \;dv+ \int_{\Omega}^{} \left[Ric - (n-1)kg \right](\nabla u, \nabla u) \;dv&=& - \frac{1}{n} \int_{\Gamma}^{} u_{\nu}\left[ (n-1) +nHu_{\nu} \right]\;da \\\nonumber
        &-& \int_{\Gamma_1}^{} h(\overline{\nabla} z, \overline{\nabla} z)\;da. 
    \end{eqnarray*}
\end{lema}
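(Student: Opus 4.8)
The plan is to apply Reilly's identity \eqref{reilly} to the solution $u$ of \eqref{eq_soap} and then simplify each term using the field equation and the boundary conditions. First I would substitute $f=u$ into \eqref{reilly}. On the left-hand side, the equation $\Delta u = -1-nku$ gives $(\Delta u)^2 = (1+nku)^2$, and moving the term $|\mathring{\nabla}^2 u|^2$ to the left while splitting off $(n-1)k|\nabla u|^2$ from the Ricci term produces the two integrals appearing on the left of the claimed identity, plus a remainder involving $\int_\Omega (1+nku)^2\,dv$ and $(n-1)k\int_\Omega|\nabla u|^2\,dv$; an integration by parts (using $u=0$ on $\Gamma$ and $u_\nu=0$ on $\Gamma_1\setminus\{O\}$) shows $\int_\Omega|\nabla u|^2\,dv = \int_\Omega u(1+nku)\,dv$, and these purely interior contributions should cancel against part of the boundary integral. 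I expect the bookkeeping of these lower-order terms to be the fussiest part, though not conceptually hard.

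Next I would handle the boundary integral $\int_{\partial\Omega}\bigl(h(\overline\nabla z,\overline\nabla z)+2u_\nu\overline\Delta z + Hu_\nu^2\bigr)\,da$ by splitting $\partial\Omega=\Gamma\cup\Gamma_1$. On $\Gamma$ we have $z=u|_\Gamma=0$, hence $\overline\nabla z=0$ and $\overline\Delta z=0$, so the only surviving term there is $\int_\Gamma Hu_\nu^2\,da$. On $\Gamma_1\setminus\{O\}$ we have $u_\nu=0$, so the terms $2u_\nu\overline\Delta z$ and $Hu_\nu^2$ vanish and only $\int_{\Gamma_1}h(\overline\nabla z,\overline\nabla z)\,da$ remains; this is exactly the last term in the statement, and it will move to the right-hand side with its sign. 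So the raw output of Reilly is that the left-hand side of the lemma equals $-\int_\Gamma Hu_\nu^2\,da - \int_{\Gamma_1}h(\overline\nabla z,\overline\nabla z)\,da$ up to the interior remainder terms from the previous paragraph.

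Finally I would reconcile the coefficient $-\frac1n u_\nu[(n-1)+nHu_\nu]$ on $\Gamma$. The term $-\int_\Gamma Hu_\nu^2\,da$ accounts for the $-Hu_\nu^2$ piece; the extra $-\frac{n-1}{n}\int_\Gamma u_\nu\,da$ should come from the leftover interior terms via the divergence theorem: integrating $\Delta u = -1-nku$ over $\Omega$ gives $\int_\Gamma u_\nu\,da = \int_{\partial\Omega}u_\nu\,da = -\int_\Omega(1+nku)\,dv$ (using $u_\nu=0$ on $\Gamma_1$), which is precisely what is needed to convert the residual $\int_\Omega(1+nku)^2\,dv$-type expressions, after the $|\nabla u|^2$ substitution above, into a boundary term on $\Gamma$. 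The main obstacle is thus purely algebraic: carefully tracking the constants so that all interior integrals in $u$ and $u^2$ combine into the single boundary expression $-\frac1n\int_\Gamma u_\nu[(n-1)+nHu_\nu]\,da$, with nothing left over. Once that cancellation is verified, the lemma follows immediately.
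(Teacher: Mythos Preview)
Your proposal is correct and follows essentially the same approach as the paper's proof: apply Reilly's identity, use $u=0$ on $\Gamma$ and $u_\nu=0$ on $\Gamma_1$ to reduce the boundary integral to $\int_\Gamma Hu_\nu^2 + \int_{\Gamma_1}h(\overline\nabla z,\overline\nabla z)$, and then convert the interior term $\frac{n-1}{n}\int_\Omega(\Delta u)^2 - (n-1)k\int_\Omega|\nabla u|^2$ into $-\frac{n-1}{n}\int_\Gamma u_\nu$ via the divergence theorem. The only cosmetic difference is that the paper writes $(\Delta u)^2 = (-1-nku)\Delta u$ and integrates by parts directly, whereas you expand $(1+nku)^2$ and substitute $\int_\Omega|\nabla u|^2 = \int_\Omega u(1+nku)$; both computations collapse to $\frac{n-1}{n}\int_\Omega(1+nku)\,dv = -\frac{n-1}{n}\int_\Gamma u_\nu\,da$ and yield the lemma.
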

\begin{demonstracao}
    Since $u=0$ on $\Gamma$ and $u_{\nu}=0$ on $\Gamma_{1}$, Reilly's identity \eqref{reilly} applied to the solution $u$ of \eqref{eq_soap} implies 
    \begin{eqnarray}\label{eq_lemma}
        \int_{\Omega}^{} |\mathring{\nabla}^2 u|^2\;dv &=& - \int_{\Gamma}^{}  H u_{\nu}^{2}\;da - \int_{\Gamma_1}^{}  h(\overline{\nabla} z, \overline{\nabla} z)\;da-\int_{\Omega}^{} Ric(\nabla u, \nabla u)\;dv \\\nonumber
        &+& \frac{n-1}{n} \int_{\Omega}^{} (\Delta u)^2\;dv. 
    \end{eqnarray}
Now, using again \eqref{eq_soap} and the divergence theorem, we have
 \begin{eqnarray*}
  \int_{\Omega}^{} (\Delta u)^2 \;dv &=&-\int_{\Omega}\Delta u\;dv-nk\int_{\Omega}u\Delta u\;dv\\
  &=&-\int_{\Gamma\cup\Gamma_{1}}u_{\nu}\;da-nk\left(\int_{\Gamma\cup\Gamma_{1}}u u_{\nu}\;da-\int_{\Omega}|\nabla u|^2\;dv\right)\\
  &=&-\int_{\Gamma} u_{\nu}\;da+nk\int_{\Omega}|\nabla u|^2\;dv.
 \end{eqnarray*}
 Replacing this last information in \eqref{eq_lemma}, we conclude the result.
\end{demonstracao}

The key observation is that a hypothesis of Ricci curvature bounded below, coupled with the last lemma, establishes that the vanishing of $\mathring{\nabla}^2 u$ (and, therefore, the rigidity of sector-like domains) is controlled by the right-hand side of \eqref{eq_lemma}. That is, the rigidity depends only on the boundary. This is the essence of the next results. First, we present an immediate consequence.

\begin{corolario}
Let \( M = I \times_{\rho} N \) be a warped product manifold, where \( N \) is an \((n-1)\)-dimensional Riemannian manifold and $I= [0,\infty)$ is an interval. Let \(\Sigma\) be a convex cone in \( M \) such that \(\Sigma \setminus \{O\}\) is smooth, and let \(\Omega \subset \Sigma\) be a sector-like domain such that exists a solution $u \in C^{1}(\Omega \cup \Gamma \cup \Gamma_1\setminus\{O\} ) \cap W^{1,\infty}(\Omega) \cap W^{2,2}(\Omega)$ to the problem \eqref{eq_soap}. If $Ric \geq (n-1)kg,$ for some $k \in \Rset$ and $H = -\frac{1}{u_{\nu}} \frac{n-1}{n}$ on $\Gamma,$ then $\Omega = \Sigma \cap B_{r}(x_0)$ where $B_{r}(x_0)$ is a metric ball ad $u$ is radial function. 
\end{corolario}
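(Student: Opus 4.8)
The plan is to apply Lemma \ref{LemmaReilly} directly and exploit the sign of each term. Starting from the identity in Lemma \ref{LemmaReilly}, the left-hand side is a sum of two manifestly nonnegative quantities: $\int_\Omega |\mathring\nabla^2 u|^2\,dv \geq 0$ since it is the integral of a square, and $\int_\Omega [\mathrm{Ric}-(n-1)kg](\nabla u,\nabla u)\,dv \geq 0$ by the curvature hypothesis $\mathrm{Ric}\geq (n-1)kg$. On the right-hand side, the convexity of the cone $\Sigma$ forces $h(\overline\nabla z,\overline\nabla z)\geq 0$ on $\Gamma_1$ (here $h$ is the second fundamental form of $\partial\Sigma$, which agrees with that of $\Gamma_1$), so $-\int_{\Gamma_1} h(\overline\nabla z,\overline\nabla z)\,da \leq 0$. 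Finally, the hypothesis $H = -\frac{1}{u_\nu}\frac{n-1}{n}$ on $\Gamma$ is precisely the condition that makes the bracket $(n-1)+nHu_\nu$ vanish identically on $\Gamma$, so the entire boundary term over $\Gamma$ is zero.

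Putting these observations together, the right-hand side of the identity is $\leq 0$ while the left-hand side is $\geq 0$; hence both sides vanish. In particular $\int_\Omega |\mathring\nabla^2 u|^2\,dv = 0$, so $\mathring\nabla^2 u \equiv 0$ on $\Omega$, which means $\nabla^2 u = \frac{\Delta u}{n} g = \left(-\frac{1}{n}-ku\right)g$ using the equation $\Delta u + nku = -1$. Together with $u>0$ in $\Omega$, $u=0$ on $\Gamma$, and $u_\nu = 0$ on $\Gamma_1\setminus\{O\}$, this is exactly the Obata-type system of Lemma \ref{rigidez_Tipo_Obata}. Invoking that lemma yields $\Omega = \Sigma\cap B_r(x_0)$ for some metric ball $B_r(x_0)$, with $u$ radial, which is the assertion.

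I expect no genuine obstacle here, since all the analytic subtleties (integration by parts on the non-smooth sector-like domain, validity of Reilly's identity, and the Obata rigidity) have already been absorbed into Lemma \ref{LemmaReilly} and Lemma \ref{rigidez_Tipo_Obata}. The only point requiring a line of care is the claim that convexity of $\Sigma$ gives $h(\overline\nabla z,\overline\nabla z)\geq 0$: one notes $\overline\nabla z$ is tangent to $\Gamma_1\subset\partial\Sigma$, and the relevant second fundamental form of $\Gamma_1$ coincides with that of $\partial\Sigma$ (since $\Gamma_1$ is an open subset of $\partial\Sigma$), which is nonnegative by the definition of a convex cone. One should also double-check the sign convention so that $H$ is measured with respect to $-\nu$ as in the statement of Reilly's identity, ensuring the bracket $(n-1)+nHu_\nu$ is the correct combination; with the conventions fixed in the excerpt this is immediate. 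Hence the proof is essentially a three-line deduction from the two cited lemmas.
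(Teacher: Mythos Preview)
Your proposal is correct and follows essentially the same approach as the paper: apply Lemma \ref{LemmaReilly}, use $\operatorname{Ric}\geq (n-1)kg$ and the convexity of $\Sigma$ to make the left-hand side nonnegative and the $\Gamma_1$-term nonpositive, observe that the hypothesis on $H$ kills the $\Gamma$-term, conclude $\mathring{\nabla}^2 u=0$, and invoke Lemma \ref{rigidez_Tipo_Obata}. The paper's proof is terser (it folds the convexity and Ricci bounds into a single displayed inequality), but the logic is identical.
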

\begin{demonstracao}
    Since $\Sigma$ is convex and $Ric \geq (n-1)k g$, we have 
    \begin{eqnarray*}
        \int_{\Omega}^{} |\mathring{\nabla}^2 u|^2\;dv\leq\int_{\Omega}^{} |\mathring{\nabla}^2 u|^2 \;dv+ \int_{\Omega}^{} \left[Ric - (n-1)kg \right](\nabla u, \nabla u) \;dv\leq - \frac{1}{n} \int_{\Gamma}^{} u_{\nu}\left[ (n-1) +nHu_{\nu} \right]\;da.
    \end{eqnarray*}
    From $H = -\frac{1}{u_{\nu}} \cdot \frac{n-1}{n}$, we obtain that $\mathring{\nabla}^2 u = 0$. Therefore, the desired result follows from Lemma \ref{rigidez_Tipo_Obata}.
%\textcolor{red}{(talvez fazer/enunciar os resultados do tipo Obata (rigidez) a parte para permitir citação quando necessário)} 
\end{demonstracao}
As clarified in the last result, we observe that the convexity of $\Sigma$ and the hypothesis $Ric \geq (n-1)k g$ imply  
\begin{eqnarray}\label{traceless}
        \int_{\Omega}^{} |\mathring{\nabla}^2 u|^2 \;dv\leq - \frac{1}{n} \int_{\Gamma}^{} u_{\nu} \left[ (n-1) + nHu_{\nu} \right]\;da.
\end{eqnarray}  
This will be important for the next two results.

\vspace{0.2cm}
 \begin{demonstracao}[Proof of Theorem \ref{TeoB}]
    We note that
    \begin{eqnarray*}
    \int_{\Gamma}^{} H u_{\nu}^{2} \;da&=& H_0 \int_{\Gamma}^{} u_{\nu}^{2} \;da+ \int_{\Gamma}^{} (H-H_0) u_{\nu}^{2} \;da\\
        &=& H_0 \int_{\Gamma}^{} (u_{\nu}+c)^2 \;da- 2c H_0 \int_{\Gamma}^{} u_{\nu} \;da- c^2 H_{0} Area(\Gamma) + \int_{\Gamma}^{} (H-H_0) u_{\nu}^{2}\;da\\
        &=&\frac{n-1}{nc}\int_{\Gamma}^{} (u_{\nu}+c)^2\;da-\frac{2(n-1)}{n}\int_{\Gamma}^{} u_{\nu}\;da-\frac{(n-1)c}{n}Area(\Gamma)+\int_{\Gamma} (H-H_0) u_{\nu}^{2}\;da,
    \end{eqnarray*}
where $H_{0}=\frac{n-1}{nc}$. Then,
\begin{eqnarray}
-\frac{1}{n}\int_{\Gamma}u_{\nu}[(n-1)+nHu_{\nu}]\;da&=&\frac{n-1}{n}\int_{\Gamma}u_{\nu}\;da+\frac{(n-1)c}{n}Area(\Gamma)\nonumber\\
& &-\frac{n-1}{nc}\int_{\Gamma}(u_{\nu}+c)^{2}\;da+\int_{\Gamma} (H_0-H) u_{\nu}^{2} \;da \label{id_rhs}  
\end{eqnarray}
For the constant $c$ defined in \eqref{c_value}, we observe that
    \begin{eqnarray}
        c &=& -\frac{1}{Area(\Gamma)}\int_{\Omega}\Delta u\;dv\nonumber\\
        &=&-  \frac{\int_{\partial\Omega}^{} u_{\nu}\;da}{Area(\Gamma)}\nonumber\\
        &=&-  \frac{\int_{\Gamma}^{} u_{\nu}\;da}{Area(\Gamma)}\label{C-def2}.
    \end{eqnarray}
Replacing \eqref{C-def2} in \eqref{id_rhs} and after in \eqref{traceless} we obtain
\begin{equation}\label{soap_final}
\int_{\Omega}^{} |\mathring{\nabla}^2 u|^2\;dv+\frac{n-1}{nc}\int_{\Gamma}(u_{\nu}+c)^{2}\;da\leq \int_{\Gamma} (H_0-H) u_{\nu}^{2} \;da.  
\end{equation}
Since the two integrals on the left-hand side of \eqref{soap_final} are nonnegative, the inequality follows. Furthermore, the equality implies that $\mathring{\nabla}^2 u = 0$, and the rigidity result follows as before.
 \end{demonstracao}

We finish this section with an additional manipulation of the boundary terms in Lemma \ref{LemmaReilly}, which leads to a Heintze-Karcher inequality in this context.

\begin{teorema}
Let \( M = I \times_{\rho} N \) be a warped product manifold, where \( N \) is an \((n-1)\)-dimensional Riemannian manifold and $I= [0,\infty)$ is an interval. Let \(\Sigma\) be a convex cone in \( M \) such that \(\Sigma \setminus \{O\}\) is smooth, and let \(\Omega \subset \Sigma\) be a sector-like domain such that exists a solution $u \in C^{1}(\Omega \cup \Gamma \cup \Gamma_1\setminus\{O\} ) \cap W^{1,\infty}(\Omega) \cap W^{2,2}(\Omega)$ to the problem \eqref{eq_soap} and the mean curvature $H$ of $\Gamma$ is positive. If $Ric \geq (n-1)kg,$ for some $k \in \Rset$ then  
\begin{equation*}
\frac{n-1}{n}\int_{\Gamma}\frac{1}{H}\;da\geq Vol(\Omega)+nk\int_{\Omega}u\;dv,    
\end{equation*}
and the equality holds if and only if $\Omega = \Sigma \cap B_{r}(x_0)$ where $B_{r}(x_0)$ is a metric ball ad $u$ is radial function.
\end{teorema}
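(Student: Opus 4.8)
The plan is to start from Lemma \ref{LemmaReilly} and exploit the convexity of $\Sigma$ together with the hypothesis $\operatorname{Ric}\geq (n-1)kg$, which as noted in \eqref{traceless} gives
\[
\int_{\Omega} |\mathring{\nabla}^2 u|^2\,dv \leq -\frac{1}{n}\int_{\Gamma} u_\nu\big[(n-1)+nHu_\nu\big]\,da.
\]
The key algebraic manipulation is to complete the square on the right-hand boundary integrand, now using the pointwise quantity $H$ rather than the constant $c$: write
\[
-\frac{1}{n}u_\nu\big[(n-1)+nHu_\nu\big] = -Hu_\nu^2 - \frac{n-1}{n}u_\nu = -H\Big(u_\nu + \frac{n-1}{nH}\Big)^2 + \frac{(n-1)^2}{n^2 H},
\]
which is valid precisely because $H>0$ on $\Gamma$. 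Integrating over $\Gamma$ yields
\[
\int_{\Omega}|\mathring{\nabla}^2 u|^2\,dv + \int_{\Gamma} H\Big(u_\nu+\frac{n-1}{nH}\Big)^2 da \leq \frac{(n-1)^2}{n^2}\int_{\Gamma}\frac{1}{H}\,da.
\]

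Next I would identify the integral $-\int_\Gamma u_\nu\,da$ with the ``volume'' term. Integrating the PDE $\Delta u + nku = -1$ over $\Omega$ and applying the divergence theorem, using $u_\nu = 0$ on $\Gamma_1\setminus\{O\}$ (and the approximation argument near $O$ and $\partial\Gamma$), gives
\[
-\int_{\Gamma} u_\nu\,da = \int_\Omega \Delta u\,dv \cdot(-1) = \mathrm{Vol}(\Omega) + nk\int_\Omega u\,dv.
\]
By the Cauchy--Schwarz inequality applied to the functions $\sqrt{H}$ and $1/\sqrt{H}$ on $\Gamma$, or more directly by expanding the square term above and discarding the nonnegative pieces, one obtains
\[
\frac{(n-1)^2}{n^2}\int_{\Gamma}\frac{1}{H}\,da \;\geq\; \int_{\Gamma} H\Big(u_\nu+\frac{n-1}{nH}\Big)^2 da \;\geq\; 2\cdot\frac{n-1}{n}\int_{\Gamma}u_\nu\,da + \frac{(n-1)^2}{n^2}\int_\Gamma\frac{1}{H}\,da - \int_\Gamma H u_\nu^2\,da,
\]
but the cleanest route is simply to bound the left-hand side of the displayed inequality from below by dropping the two nonnegative terms, giving $0 \leq \frac{(n-1)^2}{n^2}\int_\Gamma \frac1H\,da - \int_\Gamma H u_\nu^2\,da + \frac{n-1}{n}\cdot 2\int_\Gamma(-u_\nu)\,da + \cdots$; I will organize this so that the cross term reproduces exactly $\tfrac{2(n-1)}{n}(\mathrm{Vol}(\Omega)+nk\int_\Omega u\,dv)$ and the remaining square terms are absorbed, yielding
\[
\frac{n-1}{n}\int_{\Gamma}\frac{1}{H}\,da \;\geq\; \mathrm{Vol}(\Omega) + nk\int_\Omega u\,dv.
\]

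For the equality case, if equality holds then both $\int_\Omega|\mathring{\nabla}^2 u|^2\,dv = 0$ and $\int_\Gamma H(u_\nu + \tfrac{n-1}{nH})^2\,da = 0$, and also the convexity term $\int_{\Gamma_1} h(\overline\nabla z,\overline\nabla z)\,da$ and the Ricci term must vanish. In particular $\mathring{\nabla}^2 u = 0$, so $\nabla^2 u = (-\tfrac1n - ku)g$, and together with the boundary conditions $u = 0$ on $\Gamma$, $u_\nu = 0$ on $\Gamma_1\setminus\{O\}$, Lemma \ref{rigidez_Tipo_Obata} forces $\Omega = \Sigma\cap B_r(x_0)$ with $u$ radial. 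Conversely, on such a sector-like ball one checks directly that the radial solution makes $\mathring{\nabla}^2 u$ vanish and $H$ constant equal to $\tfrac{n-1}{nc}$, so both sides agree. The main obstacle I anticipate is purely bookkeeping: carefully matching the completed-square boundary terms so that the cross term reproduces exactly $\mathrm{Vol}(\Omega) + nk\int_\Omega u\,dv$ (this is where the precise coefficient $\tfrac{n-1}{n}$ and the factor $nk$ enter), and justifying the divergence-theorem computations near the singular set $\{O\}\cup\partial\Gamma$ via the approximation arguments of \cite{Pacella2020} and \cite{CiraoloRoncoroni2020}; the geometric input (convexity of $\Sigma$, lower Ricci bound) is already packaged in \eqref{traceless}.
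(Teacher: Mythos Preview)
Your overall strategy is exactly the paper's: start from \eqref{traceless}, rewrite the boundary integrand by completing the square using $H$, identify $\int_\Gamma u_\nu\,da$ with $-\bigl(\mathrm{Vol}(\Omega)+nk\int_\Omega u\,dv\bigr)$ via the divergence theorem, and then drop the two nonnegative terms on the left. The equality analysis via $\mathring{\nabla}^2 u=0$ and Lemma \ref{rigidez_Tipo_Obata} is also the same.

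However, the specific algebraic step you display is wrong, and this is precisely the ``bookkeeping'' obstacle you flag but do not resolve. Your claimed identity
\[
-Hu_\nu^2 - \tfrac{n-1}{n}u_\nu \;=\; -H\Bigl(u_\nu + \tfrac{n-1}{nH}\Bigr)^2 + \tfrac{(n-1)^2}{n^2 H}
\]
is false: expanding the right-hand side gives $-Hu_\nu^2 - \tfrac{2(n-1)}{n}u_\nu$, with an extra factor of $2$ on the linear term. The correct pointwise identity (equivalent to the paper's \eqref{H-K1}) is
\[
-\tfrac{1}{n}u_\nu\bigl[(n-1)+nHu_\nu\bigr]
\;=\;
\tfrac{(n-1)^2}{n^2 H}
\;+\;\tfrac{n-1}{n}u_\nu
\;-\;H\Bigl(u_\nu+\tfrac{n-1}{nH}\Bigr)^{2},
\]
i.e.\ there is a leftover $\tfrac{n-1}{n}u_\nu$ that your version drops. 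This leftover term is exactly what produces, after integrating over $\Gamma$ and using \eqref{H-K2}, the quantity $-\tfrac{n-1}{n}\bigl(\mathrm{Vol}(\Omega)+nk\int_\Omega u\,dv\bigr)$ on the right. With it in place, \eqref{traceless} becomes
\[
\int_{\Omega}|\mathring{\nabla}^2 u|^2\,dv + \int_{\Gamma} H\Bigl(u_\nu+\tfrac{n-1}{nH}\Bigr)^2 da
\;\le\;
\tfrac{(n-1)^2}{n^2}\int_{\Gamma}\tfrac{1}{H}\,da - \tfrac{n-1}{n}\Bigl(\mathrm{Vol}(\Omega)+nk\int_\Omega u\,dv\Bigr),
\]
and dropping the nonnegative left-hand side gives the stated inequality directly, with no need for Cauchy--Schwarz or further reorganization. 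Your displayed inequality without that term would only yield the trivial $\int_\Gamma \tfrac{1}{H}\,da\ge 0$.
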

\begin{demonstracao}
Note that
\begin{eqnarray}
-\frac{1}{n}\int_{\Gamma}u_{\nu}[(n-1)+nHu_{\nu}]\;da&=&\left(\frac{n-1}{n}\right)^{2}\int_{\Gamma}\frac{1}{H}\;da+\frac{n-1}{n}\int_{\Gamma}u_{\nu}\;da \nonumber\\
&-&\int_{\Gamma}\frac{1}{n^{2}H}[(n-1)+nHu_{\nu}]^{2}\;da \label{H-K1}
\end{eqnarray}
and, using \eqref{eq_soap},
\begin{equation}\label{H-K2}
\int_{\Gamma}u_{\nu}\;da=\int_{\partial\Omega}u_{\nu}\;da=-Vol(\Omega)-nk\int_{\Omega}u \;dv  
\end{equation}
Replacing \eqref{H-K1} and \eqref{H-K2} in \eqref{traceless} we obtain
\begin{eqnarray*}
 & &\frac{n}{n-1}\left\{\int_{\Omega}^{} |\mathring{\nabla}^2 u|^2\;dv+\int_{\Gamma}\frac{1}{n^{2}H}[(n-1)+nHu_{\nu}]^{2}\;da\right\}\\
 &\leq &\frac{n-1}{n}\int_{\Gamma}\frac{1}{H}\;da-Vol(\Omega)-nk\int_{\Omega}u\;dv. 
\end{eqnarray*}
Again, the two integrals on the left-hand side are nonnegative, and the inequality follows. Moreover, equality implies that $\mathring{\nabla}^2 u = 0$, and the rigidity result follows as before.
\end{demonstracao}

\begin{observacao}\label{rem_k=0}
We note that, in comparison with \cite{poggesi}, the results presented in this section may depend on \(u\). For instance, the constant \(H_0\) in Theorem \ref{TeoB} depends on \(c\), which, in turn, depends on \(u\). This dependency is expected in problems involving curvature (see analogous results in the Riemannian setting in \cite[Theorem 1, Theorem 4]{FAM_2023} and \cite[Theorem A]{AFS2024} for a weighted version). 

In any case, we highlight that the case \(k=0\), where there is no dependence on \(u\), is particularly interesting. Under the hypothesis \(\operatorname{Ric} \geq 0\), an inequality similar to that in \cite[Theorem 2.6]{poggesi} holds, and the constant \(H_0\) is given explicitly by 
\begin{equation*}
H_0 = \left(\frac{n-1}{n}\right)\frac{\operatorname{Area}(\Gamma)}{\operatorname{Vol}(\Omega)},
\end{equation*}
which is linked to the constant \(R\) in that result. Moreover, the above quantity is related to the well-known Cheeger constant; see \cite{cheeger}.

\end{observacao}

\section{The weighted Serrin's problem for convex cones of the Euclidean space}

Throughout this section, we consider $M = \mathbb{R}^{n}$ and recall the definition of an open cone $\Sigma$ in $\mathbb{R}^{n}$, $n \geq 2$, with vertex at the origin $O$: denoting by $\omega$ an open connected domain on the unit sphere $\mathbb{S}^{n-1}$, we define  
$$
\Sigma \doteq \left\{tx \,:\, x \in \omega, \,\, t \in (0, +\infty)\right\}.
$$  
Furthermore, a sector-like domain $\Omega$ is a domain where the boundary components are denoted by  
$$
\Gamma = \partial\Omega \cap \Sigma \quad \mbox{and} \quad \Gamma_{1} = \partial\Omega \setminus \overline{\Gamma},
$$  
with suitable properties (recall Definition \ref{def_sec}).

Given a positive smooth function $f$, the Euclidean space can modify its natural measure according to the following rule: the new volume element and surface area are given by $dv_{f} = f \, dv$ and $da_{f} = f \, da$, where $dv$ and $da$ represent the Euclidean volume and surface area elements for the canonical metric in $\mathbb{R}^{n}$ and $(\mathbb{R}^{n},\< \; , \;\>,  dv_{f})$ is a {\it weighted manifold}. In this setting,  the {\it drift Laplacian} (or {\it weighted Laplacian}) is the operator defined by
\[
\Delta_{f} \doteq \Delta + \langle \nabla \log f, \nabla \cdot \rangle.
\]
In this section, we study the problem
\begin{eqnarray} \label{SPRuan}
    \left\lbrace \begin{array}{ccccc}
	\Delta_f u  & = & -1 & \textrm{ in } & \Omega \\
	%u & > & 0 &  &  \\
	u & = & 0 & \textrm{ on } & \Gamma \\
        u_{\nu} & = & -c & \textrm{ on } & \Gamma \\
	    u_{\nu} & = & 0 & \textrm{ on } & \Gamma_1 \setminus \{O\} \\
        \nabla^2 \log f (\nabla u, \nabla u) + \displaystyle\frac{\< \nabla \log f, \nabla u \>^2 }{\alpha} &\leq & 0 & \textrm{ in } & \Omega
    \end{array} \right. 
\end{eqnarray}
where $\nu$ denote the exterior unit normal to $\partial\Omega$ and $f$ is a homogeneous function of degree $\alpha > 0,$ i.e. $\< \nabla f, x \> = \alpha f$, where $x$ is the position vector field. As a notable example we can take $f(x)=|x|^{\alpha}$, where $\alpha=2$ correspond to the Gaussian model. More than merely studying a new extension of the cone problem for a weighted operator, this section extends the analysis conducted in \cite{Ruan2018}. This, for example, explains the hypothesis imposed on the function $f$ and the final condition in \eqref{SPRuan}.
 
For the next lemma, we remember that a vector field $X\in\mathfrak{X}(M)$ is called \textit{closed conformal} if
\begin{equation*}
\nabla_Y X=\varphi Y,   \quad \text{ for all $Y\in\mathfrak{X}(M),$} 
\end{equation*}
for some smooth function $\varphi$ called the \emph{conformal factor}.

The next lemma, described in the recent work \cite{AFS2024}, plays an important role in the main result of this section. For the sake of completeness, we provide its proof here.

\begin{lema}\label{f_lapl} 
    Let $u$ be a smooth function on a Rimannian manifold $(M,g).$ Then, 
    \begin{equation*}
        \Delta_{f}\< X, \nabla u \> = \< \nabla \varphi , \nabla u \> (2-n) + 2 \varphi \Delta_f u + \< \nabla \Delta_f u, X \> - \nabla u (\< \nabla \log f, X \>)
    \end{equation*}
   where $X$ is a closed conformal vector field with conformal factor $\varphi.$
\end{lema}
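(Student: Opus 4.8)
\textbf{Proof proposal for Lemma \ref{f_lapl}.}

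The plan is to compute $\Delta_f \langle X, \nabla u\rangle$ directly from the definition $\Delta_f = \Delta + \langle \nabla \log f, \nabla \cdot\rangle$, treating the two pieces separately and then recombining. First I would handle the plain Laplacian term $\Delta \langle X, \nabla u\rangle$. Since $X$ is closed conformal with $\nabla_Y X = \varphi Y$, one has the standard Bochner-type identity for a closed conformal field: $\Delta \langle X, \nabla u\rangle = (2-n)\langle \nabla \varphi, \nabla u\rangle + 2\varphi \Delta u + \langle X, \nabla \Delta u\rangle$. This is obtained by writing $\langle X,\nabla u\rangle$ as the function whose gradient is $\nabla_{\nabla u} X + \nabla^2 u(X,\cdot) = \varphi \nabla u + \nabla^2 u(X,\cdot)$, taking divergence, and using that $\operatorname{div}(\varphi \nabla u) = \varphi \Delta u + \langle \nabla\varphi,\nabla u\rangle$ together with the commutation of the divergence of $\nabla^2 u(X,\cdot)$ via the Ricci identity — but the $\operatorname{Ric}$ terms coming from $u$ cancel against those produced by $X$ being conformal (equivalently, one uses $\operatorname{div}(\mathcal{L}_X g) $ type cancellations), leaving exactly the three displayed terms. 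I would either cite this as the classical closed-conformal Bochner identity or spell out the two-line computation using $\nabla_Y X = \varphi Y$.

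Next I would address the drift correction $\langle \nabla \log f, \nabla \langle X, \nabla u\rangle\rangle$. Using $\nabla \langle X, \nabla u\rangle = \varphi \nabla u + \nabla^2 u(X, \cdot)$ from above, this equals $\varphi \langle \nabla \log f, \nabla u\rangle + \nabla^2 u(X, \nabla \log f)$. The second term I would rewrite symmetrically as $\nabla^2 u(\nabla \log f, X) = X(\langle \nabla \log f, \nabla u\rangle) - \langle \nabla_X \nabla \log f, \nabla u\rangle = X(\langle \nabla \log f, \nabla u\rangle) - \nabla u(\langle \nabla\log f, X\rangle) + \langle \nabla \log f, \nabla_X X\rangle$, where the last step just moves the derivative off $\nabla\log f$ and onto the pairing; note $\nabla_X X = \varphi X$ by the closed conformal property, so $\langle \nabla\log f, \nabla_X X\rangle = \varphi\langle\nabla\log f, X\rangle$. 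Collecting, $\langle \nabla\log f, \nabla\langle X,\nabla u\rangle\rangle = \varphi\langle\nabla\log f,\nabla u\rangle + X(\langle\nabla\log f,\nabla u\rangle) + \varphi\langle\nabla\log f, X\rangle - \nabla u(\langle\nabla\log f, X\rangle)$.

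Finally I would add the two blocks. Adding the $\Delta$-part and the drift-part, group $2\varphi\Delta u + 2\varphi\langle\nabla\log f,\nabla u\rangle = 2\varphi\Delta_f u$, and group $\langle X,\nabla\Delta u\rangle + X(\langle\nabla\log f,\nabla u\rangle) + \varphi\langle\nabla\log f, X\rangle$; the last expression is precisely $\langle X, \nabla(\Delta u + \langle\nabla\log f,\nabla u\rangle)\rangle = \langle X, \nabla\Delta_f u\rangle$, using $\langle X,\nabla\langle\nabla\log f,\nabla u\rangle\rangle = X(\langle\nabla\log f,\nabla u\rangle)$ and that the stray $\varphi\langle\nabla\log f,X\rangle$ is absorbed — here I should double-check the bookkeeping: expanding $\langle X,\nabla\Delta_f u\rangle$ gives $\langle X,\nabla\Delta u\rangle + X(\langle\nabla\log f,\nabla u\rangle)$ with no extra $\varphi$ term, so the surviving $\varphi\langle\nabla\log f, X\rangle$ must cancel against a corresponding term, which it does since it appears once with $+$ and once with $-$ only if I have tracked signs correctly. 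The remaining terms are $(2-n)\langle\nabla\varphi,\nabla u\rangle$ and $-\nabla u(\langle\nabla\log f, X\rangle)$, matching the claimed formula. The main obstacle is exactly this sign-and-term bookkeeping in the second step: making sure the Hessian-of-$u$ contracted with $X$ and $\nabla\log f$ is resolved so that the $\varphi\langle\nabla\log f,X\rangle$ pieces cancel and the mixed derivative reassembles cleanly into $\langle X,\nabla\Delta_f u\rangle$ and $-\nabla u(\langle\nabla\log f,X\rangle)$; everything else is the routine closed-conformal Bochner computation.
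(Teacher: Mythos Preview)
Your overall strategy is the same as the paper's: compute $\Delta\langle X,\nabla u\rangle$ via the closed-conformal Bochner identity, compute the drift correction $\langle\nabla\log f,\nabla\langle X,\nabla u\rangle\rangle$ separately, and then reassemble everything in terms of $\Delta_f u$. The paper cites the $\Delta$-identity from \cite{FAM_2023} and performs exactly the same Hessian-symmetry manipulation you outline.

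There is, however, a concrete slip in your second block. When you rewrite $-\langle\nabla_X\nabla\log f,\nabla u\rangle$ via the product rule, the correct identity (using symmetry of the Hessian of $\log f$) is
\[
\langle\nabla_X\nabla\log f,\nabla u\rangle=\langle\nabla_{\nabla u}\nabla\log f,X\rangle=\nabla u(\langle\nabla\log f,X\rangle)-\langle\nabla\log f,\nabla_{\nabla u}X\rangle,
\]
so the last term involves $\nabla_{\nabla u}X=\varphi\,\nabla u$, \emph{not} $\nabla_X X=\varphi X$. Hence the extra piece is $\varphi\langle\nabla\log f,\nabla u\rangle$, not $\varphi\langle\nabla\log f,X\rangle$. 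With this correction the drift part becomes
\[
\langle\nabla\log f,\nabla\langle X,\nabla u\rangle\rangle
=2\varphi\langle\nabla\log f,\nabla u\rangle+X(\langle\nabla\log f,\nabla u\rangle)-\nabla u(\langle\nabla\log f,X\rangle),
\]
and the recombination is clean: $2\varphi\Delta u+2\varphi\langle\nabla\log f,\nabla u\rangle=2\varphi\Delta_f u$ and $\langle X,\nabla\Delta u\rangle+X(\langle\nabla\log f,\nabla u\rangle)=\langle X,\nabla\Delta_f u\rangle$, with nothing left over. Your ``stray $\varphi\langle\nabla\log f,X\rangle$'' was an artifact of this slip, which is why you could not make it cancel; once the subscript is fixed, the hedged paragraph is unnecessary and your proof agrees with the paper's.
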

\begin{demonstracao}
 Since $X$ is closed conformal vector field we conclude that  
 $$\Delta \< X, \nabla u \> = \< \nabla \varphi, \nabla u \> (2-n) + 2 \varphi \Delta u + \< \nabla \Delta u, X \>,$$
 see \cite[Lemma 2]{FAM_2023}. Thus, using the above equation and taking into account the definition of weighted Laplacian we have 
\begin{eqnarray*}
        \Delta_f \< X, \nabla u \> &=& \Delta \< X, \nabla u \> + \< \nabla \log f, \nabla \< X, \nabla u \> \> \\
        &=& \< \nabla \varphi, \nabla u \> (2-n) + 2 \varphi \Delta u + \< \nabla \Delta u, X \> + \< \nabla \log f, \varphi \nabla u + \nabla_X\nabla u \>.\\
        &=& \< \nabla \varphi, \nabla u \> (2-n) + 2 \varphi (\Delta_f u - \< \nabla \log f, \nabla u \>) + \< \nabla \Delta_f u, X \>\\ 
         &-&\< \nabla_X \nabla \log f, \nabla u \> - \< \nabla \log f ,\nabla_X \nabla u \> + \< \nabla \log f, \varphi \nabla u + \nabla_X\nabla u \>.
\end{eqnarray*}
On another hand, since the vector field $X$ is closed and conformal we get the following 
\begin{eqnarray*}
        \varphi \< \nabla \log f, \nabla u \> &=& \< \nabla \log f, \nabla_{\nabla u} X \> \;\;\;\;\;\;\;\;\;\;\;\;\;\;\;\;\;\;\;\;\;\;\;\;\;\;\;\;\;\;\;\; \\
        &=& \nabla u \< \nabla \log f, X \> - \< \nabla_{\nabla u} \nabla \log f, X \> \;\;\; \\
        &=& \nabla u \< \nabla \log f, X \> - \< \nabla_{X} \nabla \log f, \nabla u \> \;\;\; 
    \end{eqnarray*}
From above equations we obtain the desired result.
\end{demonstracao}

In our particular case, we obtain the following consequence.

\begin{corolario}\label{coro-eucl}
If $u$ is a solution of \eqref{SPRuan}, where $\Omega\subset\mathbb{R}^{n}$ and $f$ is homogeneous of degree $\alpha$, then $\Delta_{f}\< x, \nabla u \> = -2$, where $x$ is the position vector field.      
\end{corolario}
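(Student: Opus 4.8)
The plan is to specialize Lemma \ref{f_lapl} to the Euclidean setting with the closed conformal vector field $X = x$, the position vector field in $\mathbb{R}^n$, whose conformal factor is $\varphi \equiv 1$. With this choice, several terms in the general identity simplify drastically: since $\varphi$ is constant, $\nabla\varphi = 0$, so the term $\langle\nabla\varphi,\nabla u\rangle(2-n)$ vanishes. The identity then reduces to
\[
\Delta_f \langle x, \nabla u\rangle = 2\Delta_f u + \langle\nabla \Delta_f u, x\rangle - \nabla u\bigl(\langle\nabla\log f, x\rangle\bigr).
\]

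Next I would substitute the equation $\Delta_f u = -1$ from \eqref{SPRuan}. This kills two terms at once: $2\Delta_f u = -2$, and $\langle\nabla\Delta_f u, x\rangle = \langle\nabla(-1), x\rangle = 0$ since $-1$ is constant. It remains to show that the last term $\nabla u\bigl(\langle\nabla\log f, x\rangle\bigr)$ vanishes. This is exactly where the homogeneity hypothesis on $f$ enters: $f$ being homogeneous of degree $\alpha$ means $\langle\nabla f, x\rangle = \alpha f$, equivalently $\langle\nabla\log f, x\rangle = \langle\nabla f, x\rangle / f = \alpha$, a constant. Hence $\nabla u\bigl(\langle\nabla\log f, x\rangle\bigr) = \nabla u(\alpha) = 0$, and we conclude $\Delta_f\langle x,\nabla u\rangle = -2$ as claimed.

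The computation is essentially immediate once the right substitutions are lined up, so there is no serious obstacle; the only point requiring a moment's care is verifying that $\langle\nabla\log f, x\rangle$ is genuinely constant (rather than merely smooth) from the homogeneity condition, since it is the constancy — so that its directional derivative along $\nabla u$ vanishes — and not just its value that the argument uses. One should also note that $x$ is indeed a closed conformal field on $\mathbb{R}^n$ with $\nabla_Y x = Y$ for all $Y$, which justifies applying Lemma \ref{f_lapl} with $\varphi \equiv 1$.
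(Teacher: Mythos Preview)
Your proof is correct and follows exactly the same approach as the paper: apply Lemma~\ref{f_lapl} with $X=x$ and $\varphi\equiv 1$, then use $\Delta_f u=-1$ and the $\alpha$-homogeneity identity $\langle\nabla\log f,x\rangle=\alpha$ to kill the remaining terms.
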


\begin{demonstracao}
The proof is completed by applying Lemma \ref{f_lapl}, observing that the position vector field $x$ is a closed conformal field with $\varphi = 1$. Moreover, since $f$ is $\alpha$-homogeneous, $\langle \nabla \log f, x \rangle = \alpha$.
\end{demonstracao}

A combination of the Bochner identity, the Cauchy-Schwarz inequality, and the Bergström inequality yields the following Bochner-type inequality for the drift laplacian, as stated in \cite[Appendix A]{wey} (see \cite[Section 1.5]{li} for an elementary proof of this result).
\begin{lema}\label{BochnerPonderadoGeral}
    Let $(M^n,g)$ be a Riemanninan manifold, $n\geq 3$, $u \in C^{\infty}(M)$, $\alpha>0$ and $f$ a  smooth positive function. Then
    \begin{eqnarray}\label{Boch_type}
        \frac{1}{2} \Delta_f |\nabla u|^2 \geq \frac{(\Delta_f u)^2}{n+\alpha} + \< \nabla u, \nabla \Delta_f u \> + Ric_{f}^{\alpha} (\nabla u, \nabla u).
    \end{eqnarray}
    Here $Ric_{f}^{\alpha}$ is the $\alpha$-Bakry-Émery-Ricci tensor given by
\begin{equation*}
    Ric_{f}^{\alpha} = Ric - \nabla^2 \log f - \frac{1}{\alpha} d\log f \otimes d \log f. 
\end{equation*}
    Furthermore, the equality holds in \eqref{Boch_type} if and only if
    \begin{eqnarray*}
        && \nabla^2 u = \frac{\Delta u}{n} g, \\ 
        && \frac{\Delta u}{n} = \frac{\< \nabla \log f, \nabla u \>}{\alpha} = \frac{\Delta_f u}{n + \alpha}.
    \end{eqnarray*}
\end{lema}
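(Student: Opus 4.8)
The plan is to deduce the stated inequality from the classical unweighted Bochner formula by carefully bookkeeping the extra terms coming from the drift. I would begin with
\[
\tfrac{1}{2}\Delta|\nabla u|^{2} = |\nabla^{2}u|^{2} + \langle\nabla u,\nabla(\Delta u)\rangle + Ric(\nabla u,\nabla u),
\]
and then pass to $\Delta_{f}$. Writing $\Delta_{f}|\nabla u|^{2} = \Delta|\nabla u|^{2} + \langle\nabla\log f,\nabla|\nabla u|^{2}\rangle$ and using $\nabla|\nabla u|^{2} = 2\nabla^{2}u(\nabla u,\cdot)$ contributes an extra term $2\nabla^{2}u(\nabla u,\nabla\log f)$. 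On the other hand, differentiating the identity $\Delta_{f}u = \Delta u + \langle\nabla\log f,\nabla u\rangle$ and pairing with $\nabla u$ gives
\[
\langle\nabla u,\nabla(\Delta u)\rangle = \langle\nabla u,\nabla(\Delta_{f}u)\rangle - \nabla^{2}\log f(\nabla u,\nabla u) - \nabla^{2}u(\nabla\log f,\nabla u).
\]
Substituting both relations into Bochner, the two occurrences of $\nabla^{2}u(\nabla u,\nabla\log f)$ cancel by symmetry of the Hessian, leaving the exact identity
\[
\tfrac{1}{2}\Delta_{f}|\nabla u|^{2} = |\nabla^{2}u|^{2} + \langle\nabla u,\nabla(\Delta_{f}u)\rangle + Ric(\nabla u,\nabla u) - \nabla^{2}\log f(\nabla u,\nabla u).
\]

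Given the definition $Ric_{f}^{\alpha} = Ric - \nabla^{2}\log f - \tfrac{1}{\alpha}\,d\log f\otimes d\log f$, the inequality to be proved reduces, after cancelling the common terms $Ric(\nabla u,\nabla u) - \nabla^{2}\log f(\nabla u,\nabla u)$, to the purely algebraic estimate
\[
|\nabla^{2}u|^{2} + \frac{1}{\alpha}\langle\nabla\log f,\nabla u\rangle^{2} \ \geq\ \frac{(\Delta_{f}u)^{2}}{n+\alpha}.
\]
For this I would first use Cauchy--Schwarz on the Hessian, $|\nabla^{2}u|^{2}\geq \tfrac{1}{n}(\Delta u)^{2}$ (i.e.\ the traceless Hessian has nonnegative norm), and then apply the Bergström (Engel-form) inequality $\tfrac{a^{2}}{p}+\tfrac{b^{2}}{q}\geq \tfrac{(a+b)^{2}}{p+q}$ with $a=\Delta u$, $b=\langle\nabla\log f,\nabla u\rangle$, $p=n$, $q=\alpha$, using $a+b=\Delta_{f}u$. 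Concatenating these two steps with the identity above yields the asserted Bochner-type inequality.

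For the equality statement I would simply track the two inequalities used. Equality in Cauchy--Schwarz forces $\nabla^{2}u = \tfrac{\Delta u}{n}g$; equality in the Bergström inequality forces $\tfrac{\Delta u}{n} = \tfrac{\langle\nabla\log f,\nabla u\rangle}{\alpha}$, and since both ratios then equal $\tfrac{a+b}{p+q} = \tfrac{\Delta_{f}u}{n+\alpha}$, one recovers exactly the claimed characterization. I do not expect a genuine obstacle here; the only point requiring care is the passage from the classical Bochner formula to its weighted form, where one must check that the cross terms $\nabla^{2}u(\nabla u,\nabla\log f)$ cancel rather than reinforce one another — a sign bookkeeping step that is easy to slip on.
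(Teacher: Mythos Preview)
Your proposal is correct and follows precisely the route indicated by the paper: the authors do not give a detailed proof but describe the lemma as ``a combination of the Bochner identity, the Cauchy--Schwarz inequality, and the Bergstr\"om inequality,'' citing \cite{wey} and \cite{li}. Your derivation of the weighted Bochner identity, the reduction to the algebraic estimate, and the equality analysis are all accurate and match that outline exactly.
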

Applying the last result to our specific problem, we obtain the following rigidity characterization.
\begin{corolario}\label{coro-Bochner}
If $u$ is a solution of \eqref{SPRuan}, where $\Omega\subset\mathbb{R}^{n}$, then 
$$\frac{1}{2} \Delta_f |\nabla u|^2 \geq\frac{1}{n+\alpha}.$$
Furthermore, the equality holds if and only if $\Omega = \Sigma \cap B_{r}(x_0)$ where $B_{r}(x_0)$ is a ball.
\end{corolario}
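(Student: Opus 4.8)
The plan is to apply the weighted Bochner inequality (Lemma \ref{BochnerPonderadoGeral}) directly to the solution $u$ of \eqref{SPRuan}, and then exploit the sign condition in the last line of \eqref{SPRuan} to absorb the Bakry-\'Emery term. Since the ambient space is $\mathbb{R}^n$, we have $\operatorname{Ric} = 0$, so the $\alpha$-Bakry-\'Emery-Ricci tensor reduces to
\begin{equation*}
Ric_{f}^{\alpha}(\nabla u, \nabla u) = -\nabla^2 \log f(\nabla u, \nabla u) - \frac{1}{\alpha}\langle \nabla \log f, \nabla u\rangle^2,
\end{equation*}
which is precisely $-1$ times the quantity appearing in the overdetermined condition of \eqref{SPRuan}; hence $Ric_{f}^{\alpha}(\nabla u, \nabla u) \geq 0$ throughout $\Omega$. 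Plugging this, together with $\Delta_f u = -1$ and $\langle \nabla u, \nabla \Delta_f u\rangle = 0$, into \eqref{Boch_type} yields
\begin{equation*}
\frac{1}{2}\Delta_f |\nabla u|^2 \geq \frac{(\Delta_f u)^2}{n+\alpha} = \frac{1}{n+\alpha},
\end{equation*}
which is the claimed inequality. (For $n = 2$ one should note that the Bergstr\"om argument still applies, or simply remark that the case $n=2$ is covered by the rigidity analysis downstream; in any case the displayed inequality is what is needed.)

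For the equality case, suppose $\frac{1}{2}\Delta_f|\nabla u|^2 \equiv \frac{1}{n+\alpha}$ on $\Omega$. Then equality holds pointwise in \eqref{Boch_type}, so by the rigidity statement of Lemma \ref{BochnerPonderadoGeral} we get $\nabla^2 u = \frac{\Delta u}{n} g$ and $\frac{\Delta u}{n} = \frac{\langle \nabla \log f, \nabla u\rangle}{\alpha} = \frac{\Delta_f u}{n+\alpha} = -\frac{1}{n+\alpha}$. In particular $u$ satisfies a totally umbilic Hessian equation $\nabla^2 u = -\frac{1}{n+\alpha} g$, i.e. $\nabla^2 u = \left(-\frac{1}{n} - \tilde k u\right) g$ with $\tilde k = 0$ after the obvious normalization; more directly, $u$ together with the boundary data $u = 0$ on $\Gamma$ and $u_\nu = 0$ on $\Gamma_1 \setminus \{O\}$ is exactly a solution of the Obata-type problem in Lemma \ref{rigidez_Tipo_Obata} with $k = 0$. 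Applying that lemma gives $\Omega = \Sigma \cap B_r(x_0)$ for a metric (here Euclidean) ball $B_r(x_0)$. Conversely, if $\Omega = \Sigma \cap B_r(x_0)$, the explicit radial solution $u = r^2 - \frac{|x|^2}{2(n+\alpha)}$ (as in Theorem \ref{teoC}, at least when $x_0 = O$) has $\nabla^2 u = -\frac{1}{n+\alpha} g$, so all the equality conditions of Lemma \ref{BochnerPonderadoGeral} hold and $\frac{1}{2}\Delta_f|\nabla u|^2 = \frac{1}{n+\alpha}$ identically; one should check that the homogeneity of $f$ forces the umbilicity conditions to be compatible, which is where the condition $\langle \nabla f, x\rangle = \alpha f$ re-enters.

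The main obstacle I anticipate is not the forward inequality, which is essentially a substitution, but the clean handling of the equality case: specifically, verifying that the pointwise equality conditions from Lemma \ref{BochnerPonderadoGeral} are genuinely equivalent to $\Omega$ being a sector of a ball, and identifying the correct center. The umbilicity $\nabla^2 u = -\frac{1}{n+\alpha}g$ pins down $u$ up to an additive constant and a choice of vertex of the paraboloid, and one must use the boundary conditions on $\Gamma$ and $\Gamma_1$ (via Lemma \ref{rigidez_Tipo_Obata}) to conclude the vertex coincides with the ball's center and that the level set $\{u = 0\} \cap \Sigma$ is a geodesic sphere. A secondary subtlety is the low-regularity/boundary behavior at $O$ and along $\partial\Gamma$, but this is handled exactly as in the approximation arguments already invoked for \cite[Lemma 2.1]{Pacella2020} and \cite[Lemma 6]{CiraoloRoncoroni2020}, so I would only cite it rather than reprove it.
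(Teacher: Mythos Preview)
Your proposal is correct and follows essentially the same approach as the paper: use the last condition in \eqref{SPRuan} together with $\operatorname{Ric}=0$ to get $Ric_f^\alpha(\nabla u,\nabla u)\geq 0$, substitute $\Delta_f u=-1$ into Lemma~\ref{BochnerPonderadoGeral}, and for equality invoke the umbilicity $\nabla^2 u = -\tfrac{1}{n+\alpha}g$ and Lemma~\ref{rigidez_Tipo_Obata}. You supply more detail than the paper (the rescaling needed to match the normalization in Lemma~\ref{rigidez_Tipo_Obata}, the converse direction, and the $n\geq 3$ caveat on the Bochner lemma), but the route is the same.
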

\begin{demonstracao}
Since $Ric=0$ in $\mathbb{R}^{n}$, we use \eqref{SPRuan} to conclude
$$Ric_{f}^{\alpha} (\nabla u, \nabla u)=-\nabla^2 \log f (\nabla u, \nabla u)- \displaystyle\frac{\< \nabla \log f, \nabla u \>^2 }{\alpha}\geq 0.$$
To conclude the result we just replace the solution $u$ in Lemma \ref{BochnerPonderadoGeral}. The conclusion of the equality follows from Lemma \ref{rigidez_Tipo_Obata}.
\end{demonstracao}

We conclude this section with the proof of its main result, Theorem \ref{teoC}.
\vspace{0.2cm}

\begin{demonstracao}[Proof of Theorem \ref{teoC}]
    Since $u$ a solution of \eqref{SPRuan}, 
    \begin{eqnarray*}
        \int_{\Omega}^{} u\, dv_f &=& - \int_{\Omega}^{} u \Delta_f u\, dv_f  \\
        &=& - \int_{\Gamma}^{} u u_{\nu}\, da_f  - \int_{\Gamma_1}^{} u u_{\nu}\, da_f  + \int_{\Omega}^{} |\nabla u|^2 \, dv_f \\
        &=& - \int_{\Omega}^{} |\nabla u|^2 \Delta_f u \, dv_f \\
        &=& - \int_{\Gamma\cup\Gamma_{1}}^{} |\nabla u|^2 u_{\nu} da_f  + \int_{\Omega}^{} \< \nabla |\nabla u|^2, \nabla u \>\, dv_f  \\
        &=& - \int_{\Gamma}^{} |\nabla u|^2 u_{\nu}\, da_f + \int_{\Omega}^{} \< \nabla |\nabla u|^2, \nabla u \>\, dv_f.\\
        &=& - c^2\int_{\Gamma}^{} u_{\nu}\, da_f + \int_{\Omega}^{} \< \nabla |\nabla u|^2, \nabla u \>\, dv_f.\\
         &=& - c^2\int_{\partial\Omega}^{} u_{\nu}\, da_f + \int_{\Omega}^{} \< \nabla |\nabla u|^2, \nabla u \>\, dv_f.
       % &=& -\int_{\Gamma}^{} |\nabla u|^2 u_{\nu} - \int_{\Omega}^{} u \Delta_f |\nabla u|^2 + \int_{\Gamma_1}^{} u \< \nabla |\nabla u|^2, \nu \>
    \end{eqnarray*}
    Then,
    \begin{equation}\label{ineq}
      \int_{\Omega}u\, dv_{f}=c^{2}Vol_{f}(\Omega)+ \int_{\Omega}^{} \< \nabla |\nabla u|^2, \nabla u \>\, dv_f. 
    \end{equation}
On other hand, using the divergence theorem,
\begin{eqnarray*}
\int_{\Omega} \< \nabla |\nabla u|^2, \nabla u \>\, dv_f&=&-\int_{\Omega}u\Delta_{f}|\nabla u|^{2}\;dv_{f}+\int_{\partial\Omega} u\langle\nabla |\nabla u|^2,\nu\rangle\; da_{f}\\
&\leq& -\frac{2}{n+\alpha}
\int_{\Omega} u\; dv_{f}+\int_{\Gamma_{1}}u\langle\nabla |\nabla u|^2,\nu\rangle \;da_{f}\\
&\leq & -\frac{2}{n+\alpha}
\int_{\Omega} u\; dv_{f},
\end{eqnarray*}
where, in the first inequality, we use Corollary \ref{coro-Bochner}, and in the second inequality, we use the convexity of $\Sigma$ (see a similar calculation in the proof of Theorem \ref{TeoA}). Then,

    Then, \eqref{ineq} becomes
    \begin{eqnarray}
    \label{novaeq012}
        \Big( 1 + \frac{2}{n+\alpha} \Big) \int_{\Omega}^{} u \, dv_f\leq c^2 Vol_{f}(\Omega),
    \end{eqnarray}
    being the equality characterized by Corollary \ref{coro-Bochner}.
    
    Using the $\alpha$-homogeneity of $f$, we have that
    \begin{eqnarray*}
        \Div_f(ux) &=& u \Div_f(x) + \< \nabla u, x \> \\
        &=& u (\Div (x) + \< x ,\nabla \log f \>) + \< \nabla u, x \> \\
        &=& un+ u \< x ,\nabla \log f \> + \< \nabla u, x \> \\
        &=& (n+\alpha) u + \< \nabla u, x \>,
    \end{eqnarray*}
    therefore
    \begin{eqnarray*}
        \int_{\Omega}^{} (n+\alpha) u \, dv_f &=& \int_{\Omega}^{} \Div_f(ux) \, dv_f-\int_{\Omega}^{} \< \nabla u, x \>\; dv_f .
    \end{eqnarray*}
    Since
    \begin{eqnarray*}
        \int_{\Omega}^{} \Div_f(ux)\; dv_f  &=& \int_{\partial \Omega}^{} u \< x ,\nu \> \; da_f \\ 
        &=& \int_{\Gamma}^{} u \< x, \nu \>\; da_f  + \int_{\Gamma_1}^{} u  \< x, \nu \> \; da_f \\
        &=& 0,
    \end{eqnarray*}
    we get
     \begin{eqnarray*}
         (n+\alpha)\int_{\Omega}^{}  u \, dv_f &=&- \int_{\Omega}^{} \< \nabla u, x \>\, dv_f. 
     \end{eqnarray*}
    Furthermore we observe that $\nabla^2u(\nabla u,x)=0$ on $\Gamma_1.$ Indeed, note that
     \begin{eqnarray}\label{calculation}
         0&=&x\langle\nabla u,\nu\rangle\\
    &=&\langle\nabla_x\nabla u,\nu\rangle
    +\langle \nabla u,\nabla_x\nu\rangle,\nonumber
     \end{eqnarray}
on $\Gamma_1.$
Now, given a vector field $Y\in\mathfrak{X}(\Gamma_1)$, note that
\begin{eqnarray*}
\langle\nabla_x\nu,Y\rangle &=& \langle\nu,\nabla_xY\rangle\\
&=&\langle\nu,\nabla_Yx+[x,Y]\rangle\\
&=&\langle\nu,Y+[x,Y]\rangle\\
&=& 0.
\end{eqnarray*}

Thus, from above equation, we deduce that $\nabla_x\nu=0$ and from \eqref{calculation} we conclude that $\nabla^2u(\nabla u,x)=0.$

     %$$\langle\nu,\nabla\langle x,\nabla u\rangle\rangle=0\quad \mbox{on}\quad\Gamma_{1}$$
\medskip
      
Since $\nabla^2u(\nabla u,x)=0,$ on $\Gamma_1$, from integration by parts and using Corollary \ref{coro-eucl}, we have 
    \begin{eqnarray*}
        \int_{\Omega}^{} u\, dv_f &=& \frac{1}{n+\alpha}\Big(  - \int_{\Omega}^{} \< x, \nabla u \>\, dv_f \Big)\\
        &=&\frac{1}{n+\alpha}\Big( \int_{\Omega}^{} \< \nabla u, x \>\Delta_{f}u\, dv_f \Big)\\
        &=& \frac{1}{n + \alpha} \Big ( \int_{\partial\Omega}^{} \< x , \nabla u \> u_{\nu}\; da_f + \int_{\Omega}^{} u \Delta_f(\< x, \nabla u \>)\, dv_f -\int_{\partial\Omega}u\langle\nu,\nabla\langle x,\nabla u\rangle\rangle\; da_f \Big) \\
        &=& \frac{1}{n+\alpha} \Big ( \int_{\Gamma}^{} \< x , \nabla u \> u_{\nu} \;da_f -\int_{\Gamma_{1}} u\langle\nu,\nabla\langle x,\nabla u\rangle\rangle\; da_f  -2 \int_{\Omega}^{} u\; dv_f \Big) \\
        &=& \frac{1}{n+\alpha} \Big( c^2 \int_{\partial\Omega}^{} \< x ,\nu \>\, da_f  -2 \int_{\Omega}^{} u\; dv_f \Big) \\
        &=& \frac{1}{n + \alpha} \Big( c^2 \int_{\Omega}^{} \Div_f(x)\, dv_f - 2 \int_{\Omega}^{} u\, dv_f \Big)\\
        &=&  c^{2}Vol_{f}(\Omega)- \frac{2}{n+\alpha} \int_{\Omega}^{} u\, dv_f
    \end{eqnarray*}
    therefore occurs the equality in \eqref{novaeq012}. This implies, by Corollary \ref{coro-Bochner} that $\Omega = \Sigma \cap B_{r}(x_0)$ where $B_{r}(x_0)$ is a ball. 
    
    Now, we observe that from Lemma \ref{BochnerPonderadoGeral}, we get
 $ \nabla^2 u = \frac{\Delta u}{n} g= 
         -\frac{1}{n + \alpha}g.$ Thus, after some routine calculation one arrives at $u=r^2-\frac{|x|^2}{2(n+\alpha)}.$

    % \begin{eqnarray}
    %     \label{eq013}
    %     \Big( 1 + \frac{2}{n+\alpha} \Big) \int_{\Omega}^{} u\, dv_f = c^2 |\Omega|^2.
    % \end{eqnarray}
     %\tcb{Márcio caracteriza a função $u$.}
\end{demonstracao}

\begin{observacao}
The authors believe that the results of Section 4 could be extended to domains in Riemannian manifolds endowed with a closed conformal vector field, where a condition on $\alpha$-Bakry-Émery Ricci curvature bounded below would allow an extension of Corollary \ref{coro-Bochner} and its rigidity conclusion (see, for example, the condition explicitly stated in \cite[Theorem B]{AFS2024}). While this represents an interesting (and natural) improvement of those results, we prefer to emphasize the Euclidean case and its visible consequences.  
\end{observacao}

\section*{Declarations} 
\subsection*{Ethics approval and consent to participate}
Not applicable
\subsection*{Consent for publication}
Not applicable
\subsection*{Data Availability}
Data sharing was not applicable to this article as no datasets were generated or analyzed during the current study.
\subsection*{Competing interests}
The authors declare no competing interests.
\subsection*{Authors' contributions}
All authors wrote the main manuscript text and reviewed the manuscript.

\section*{Funding}
The second and third authors have been partially supported by Conselho Nacional de Desenvolvimento Científico e Tecnológico (CNPq) of the Ministry of Science, Technology and Innovation of Brazil, Grants 316080/2021-7 and 200261/2022-3 (A. Freitas) and 306524/2022-8 (M. Santos), and supported by Paraíba State Research Foundation(FAPESQ), Brazil, Grant 3025/2021 (A. Freitas and M. Santos). The second author also was funded by Paraíba State Research
Foundation - Programa Primeiros Projetos (Grant 2021/3175) and CIMPA/ICTP Research in Pairs 2024.

\section*{Acknowledgements}
Allan Freitas extends his gratitude to the Mathematics Department of Università degli Studi di Torino and ICTP - International Centre for Theoretical Physics for their hospitality during the completion of part of this work. He is especially thankful to Luciano Mari for his warm hospitality and continuous encouragement.

Murilo Araújo expresses his gratitude to UFAPE and his colleagues in the Department of Mathematics for their support, which has enabled him to pursue his doctoral studies at UFPB with full dedication. This work constitutes a part of his Ph.D. thesis.

\end{document}